\newtheorem{coro}{Corollary}
\newtheorem{defi}{Definition}[section]
\newtheorem{prop}{Proposition}[section]
\newtheorem{theo}{Theorem}
\newtheorem{lemm}{Lemma}[section]
\newtheorem{rema}{Remark}
\newtheorem{clai}{Claim}
\newfont{\df}{cmssbx10}
\def\R{I\kern -0.37 em R}
\def\N{I\kern -0.37 em N}
\def\Z{I\kern -0.37 em Z}
\def\supess_#1{\mathop{\rm supess}\limits_{#1}}
\def\infess_#1{\mathop{\rm infess}\limits_{#1}}
\def\AA{{\mathbb A}}  
\def\DD{{\mathbb D}}
 \def\NN{{\mathbb N}} 
 \def\RR{{\mathbb R}} \def\SS{{\mathbb S}}
\def\TT{{\mathbb T}}
 \def\ZZ{{\mathbb Z}}
\begin{document}

\title[Burnside problem on surfaces.]{\bf  Burnside problem for groups of homeomorphisms of compact surfaces. }
\author{Nancy Guelman and Isabelle Liousse} \thanks{This paper was partially
supported by the Labex CEMPI  (ANR-11-LABX-0007-01),  Universit\'{e} de Lille 1, PEDECIBA, Universidad de la
Rep\'{u}blica and IFUM}
%We thank the warm hospitality of the .}

\address{{\bf  Nancy Guelman}
IMERL, Facultad de Ingenier\'{\i}a, Universidad de la Rep\'ublica,
C.C. 30, Montevideo, Uruguay.  \emph{nguelman@fing.edu.uy}.}

\address{{\bf    Isabelle Liousse}, UMR CNRS 8524, Universit\'{e} de Lille1,
59655 Villeneuve d'Ascq C\'{e}dex,   France.  \emph {liousse@math.univ-lille1.fr}. }

\begin{abstract} A group $\Gamma$ is said to be periodic if for any $g$ in $\Gamma$ there is a positive integer $n$  with $g^n=id$.
 We first prove that a finitely generated periodic group acting on the 2-sphere $\SS^2$ by $C^1$-diffeomorphisms    with a  finite orbit,  is finite and conjugate to a subgroup of $\mathrm{O}(3,\R)$  and we  use  it for proving that a finitely generated periodic group of spherical diffeomorphisms with even bounded orders is finite.

% We show that a  finitely generated periodic group of homeomorphisms of the closed disk $\DD^2$ or the closed annulus $\AA^2$  is finite.
Finally, we show that a finitely generated periodic group of homeomorphisms of any orientable compact surface other than the 2-sphere or the 2-torus (which is the purpose of a previous paper of the authors) is finite. \end{abstract}

\maketitle
\date{}

\section {Introduction.}

\medskip

\begin{defi}
A group $\Gamma$ is said to be {\bf periodic} if  any $g$ in $\Gamma$ has finite order, that is, there exists a positive integer $n$  with $g^n=id$.

\end{defi}

One of the oldest problem in group theory was first posed by William
Burnside in 1902 (see \cite{B}): {\em ``Let $\Gamma$ be a finitely generated periodic group.
Is $\Gamma$ necessary a finite group?" }

It is obvious that an abelian finitely generated periodic group is
finite.

In 1911, Schur (see \cite{Sh}) proved that this is true for
subgroups of $\mathrm{GL}(k,\mathbb C)$, $k\in \mathbb N$.

\smallskip

But, in general,  according to Golod (see \cite{Go}) the answer is negative.  Later,  Adjan and Novikov  (see \cite{AN}), Ol'shanskii, Ivanov  and  Lysenok  (see \cite{Ol}, \cite{Iv} and \cite{Ly}) exhibited many examples of infinite,  finitely generated and periodic groups  with even bounded orders.

\smallskip

The problem raised by Burnside is still open for groups of homeomorphisms (or
diffeomorphisms) on closed manifolds. Very few examples are known.

\medskip

As a corollary of H\"older theorem (see, for example, section 2.2.4 of \cite{Na}), it holds that  a finitely generated periodic  group of
circle homeomorphisms is finite. Note that, even in this case,  finiteness of the generating set is crucial: the group
consisting of all rational circle rotations is periodic and infinite.

\smallskip

Rebelo and Silva (see \cite{RS}) proved that any finitely generated
periodic subgroup of $C^2$-symplectomorphisms of a  compact
4-dimensional symplectic manifold is  finite, provided that
the fundamental class in $H^4 (M, Z)$ is a product of classes in $
H^1 (M, Z)$.

\smallskip

The authors proved (see \cite{GL3}) that any finitely generated periodic  subgroup  of $\mathrm{Homeo }_\mu(\TT^2)$ is finite, where  $\mu$ is a probability
measure on $\TT^2$ and $ \mathrm{Homeo }_\mu (\TT^2)$ is the subgroup of
orientation-preserving homeomorphisms of $\TT^2$ preserving $\mu$.  Moreover, they showed that every finitely generated 2-group of toral homeomorphisms is finite.

 As a consequence of this result, we get the following

 \begin{coro}   Any finitely generated periodic  subgroup  of $\mathrm{Homeo }(\AA^2)$ is finite, where  $\AA^2$ is the closed annulus.\end{coro}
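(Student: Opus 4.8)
The plan is to reduce the statement to the torus theorem of \cite{GL3} by \emph{doubling} the annulus into a torus, but only after first taming the action on the boundary. Throughout, let $\Gamma \subset \mathrm{Homeo}(\AA^2)$ be finitely generated and periodic, and write $\partial\AA^2 = C_0 \sqcup C_1$ for the two boundary circles.

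First I would control the boundary. Restriction to $\partial\AA^2$ gives a homomorphism $\rho : \Gamma \to \mathrm{Homeo}(C_0 \sqcup C_1)$, whose image is again finitely generated (being a quotient of $\Gamma$) and periodic. The index-two subgroup $H \le \rho(\Gamma)$ preserving each $C_i$ embeds in $\mathrm{Homeo}(\SS^1)\times\mathrm{Homeo}(\SS^1)$; projecting to each factor yields finitely generated periodic groups of circle homeomorphisms, which are finite by the H\"older corollary recalled in the introduction, so $H$, and hence $\rho(\Gamma)$, is finite. Therefore $\Gamma_1 := \ker\rho$ has finite index in $\Gamma$; since $\Gamma$ is finite precisely when $\Gamma_1$ is, and $\Gamma_1$ is itself finitely generated and periodic, I may replace $\Gamma$ by $\Gamma_1$ and assume that every $g \in \Gamma$ fixes $\partial\AA^2$ pointwise.

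Next comes the doubling. I realize $\TT^2 = \AA_+ \cup_E \AA_-$ as the union of two copies of the annulus glued along their common boundary circles $E$. Because each $g \in \Gamma$ is the identity on $E$, the formula $\hat g|_{\AA_+} = g$ and $\hat g|_{\AA_-} = \mathrm{id}$ defines a homeomorphism of $\TT^2$, and $g \mapsto \hat g$ is an injective homomorphism $\Gamma \hookrightarrow \mathrm{Homeo}(\TT^2)$. Two observations make the cited theorem applicable. First, a homeomorphism of $\AA^2$ fixing a boundary circle pointwise preserves the induced boundary orientation and is therefore orientation-preserving on $\AA^2$; consequently each $\hat g$ is orientation-preserving on $\TT^2$. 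Second, since $\hat g$ restricts to the identity on $\AA_-$, the normalized area measure $\mu$ supported on $\AA_-$ satisfies $\hat g_* \mu = \mu$ for every $g$. Thus $\hat\Gamma := \{\hat g : g \in \Gamma\}$ is a finitely generated periodic subgroup of $\mathrm{Homeo}_\mu(\TT^2)$, which is finite by \cite{GL3}; as $g \mapsto \hat g$ is injective, $\Gamma$ is finite.

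The only genuinely delicate point is producing a common invariant measure for the torus theorem, and the boundary reduction of the first step is exactly what renders this free: once every element fixes $E$ pointwise, extending by the identity on $\AA_-$ turns the requirement of an invariant measure into the tautology that a map preserves any measure carried by the region on which it is the identity. I would still verify the two supporting facts carefully — finiteness of the boundary action (resting on the circle H\"older corollary) and the orientation claim for boundary-fixing annulus homeomorphisms — but I expect neither to present a real obstacle.
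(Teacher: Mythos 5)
Your proof is correct, and it reaches the paper's target theorem (\cite{GL3}) by a genuinely different variant of the doubling trick. The paper's first proof doubles \emph{every} element symmetrically: $\tilde g$ acts as $g$ on \emph{both} copies of $\AA^2$; it then invokes the circle Burnside result to get a finite orbit of the doubled group on the gluing circles, and the resulting atomic invariant probability measure is what feeds into \cite{GL3}. You instead spend the circle Burnside result \emph{first}, passing to the finite-index kernel $\Gamma_1$ that fixes $\partial\AA^2$ pointwise (this needs Schreier's lemma for finite generation of $H$ and of $\ker\rho$ -- you assert it but should cite it; the paper uses the same fact elsewhere), and only then double, extending by the identity on the second copy, so that the invariant measure is tautological (any measure carried by the untouched half). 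Your variant buys something real: orientation. The theorem of \cite{GL3}, as stated, concerns \emph{orientation-preserving} measure-preserving homeomorphisms, and your reduction makes this automatic, since a homeomorphism of $\AA^2$ fixing a boundary circle pointwise preserves orientation; by contrast, the paper's mirror double of an orientation-reversing element of $\Gamma$ is orientation-reversing on $\TT^2$, so the paper's argument implicitly requires first passing to the orientation-preserving subgroup of index at most $2$, a point it glosses over. What the paper's route buys is economy: no kernel reduction, no Schreier, and the whole group is doubled at once. One further remark: after your reduction, the torus theorem is arguably more than you need -- the rigidity fact underlying the paper's second proof (a finite-order homeomorphism of $\AA^2$ is an isometry of some flat metric, and an orientation-preserving isometry that is the identity on $\partial\AA^2$ is the identity; see \cite{Ep}, \cite{CK}) shows directly that your $\Gamma_1$ is trivial, so $\Gamma\cong\rho(\Gamma)$ is already finite. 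Your version keeps the argument dependent only on the H\"older corollary and \cite{GL3}, which is a perfectly reasonable trade.
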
\label{Ann}

The idea of the proof is to  form the double of $\AA^2$ by gluing two copies of $\AA^2$ along their boundaries. This will be explained in section 5.

\smallskip

In this paper, we study   related questions. We first consider  finitely
generated periodic groups of diffeomorphisms of the 2-sphere,  $\mathbb{S}^2$  and prove

\begin{theo}
\label{theo1} \

Let $\Gamma$ be a finitely generated periodic group of $C^1$-diffeomorphisms
of $ \mathbb{S}^2$. If $\Gamma$ has a finite orbit then it is finite.

Moreover if $\Gamma$ consists in orientation preserving  $C^1$-diffeomorphisms and acts with  a global fixed point then it is abelian.
\end{theo}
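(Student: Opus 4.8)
The plan is to study the group through its derivative representation at the global fixed point. Let $p\in\SS^2$ be a point fixed by every element of $\Gamma$, and consider
\[
D_p:\Gamma\longrightarrow \mathrm{GL}^+(2,\RR),\qquad g\longmapsto D_pg,
\]
the image landing in $\mathrm{GL}^+(2,\RR)$ because the elements of $\Gamma$ preserve orientation. By the chain rule $D_p$ is a homomorphism, and since $g^n=\mathrm{id}$ forces $(D_pg)^n=\mathrm{id}$, the image $D_p(\Gamma)$ is again periodic; being also finitely generated, it is finite by Schur's theorem. A finite subgroup of $\mathrm{GL}^+(2,\RR)$ preserves an averaged inner product, hence is conjugate into $\mathrm{SO}(2)$, and every finite subgroup of the circle group $\mathrm{SO}(2)$ is cyclic. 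Thus $D_p(\Gamma)$ is finite cyclic, in particular abelian.

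It remains to transfer this conclusion from the image to $\Gamma$ itself, so I would prove that $D_p$ is injective. Let $g\in\Gamma$ satisfy $D_pg=\mathrm{id}$; I claim $g=\mathrm{id}$. This rigidity statement --- a periodic $C^1$-diffeomorphism tangent to the identity at a fixed point must be the identity --- is the crux of the argument, and I expect it to be the main obstacle.

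I would establish it in two steps. First, Bochner's linearization: working in a chart centred at $p$ and writing $L=D_pg=\mathrm{id}$, the averaged map
\[
h=\frac{1}{n}\sum_{j=0}^{n-1} L^{-j}\circ g^{j}
\]
satisfies $h\circ g=L\circ h$ and $D_ph=\mathrm{id}$, so $h$ is a local $C^1$-diffeomorphism; since $L=\mathrm{id}$ this reads $h\circ g=h$, whence $g=\mathrm{id}$ on a neighbourhood of $p$. (The $C^1$-regularity is exactly what makes $h$ a diffeomorphism.) Second, Newman's theorem: a nontrivial finite-order homeomorphism of a connected manifold cannot fix a set with nonempty interior. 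As $g$ is periodic and fixes an open set, it must be the identity on all of $\SS^2$. Therefore $D_p$ is injective and $\Gamma\cong D_p(\Gamma)$ is finite cyclic, hence abelian.

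Alternatively, one may simply invoke the first assertion of the theorem: a global fixed point is in particular a finite orbit, so $\Gamma$ is conjugate to a finite subgroup $G\subset\mathrm{O}(3,\RR)$, and orientation preservation (a conjugation-invariant property) gives $G\subset\mathrm{SO}(3,\RR)$. Every element of $G$ fixes the image $q$ of $p$, hence fixes the axis $\RR q$ and is a rotation about it, so $G\subset\mathrm{SO}(2)$; being finite it is cyclic. From either route $\Gamma$ is in fact cyclic, which is stronger than the asserted commutativity; the genuinely hard input is the finiteness and $\mathrm{O}(3)$-conjugacy of the first part, or, for the self-contained route, the Bochner--Newman rigidity step.
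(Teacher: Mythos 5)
Your main route, as written, proves only the second assertion of the theorem; the first assertion --- that a finitely generated periodic group of $C^1$-diffeomorphisms of $\SS^2$ (with no orientation hypothesis) having a finite orbit (not assumed to be a single point) is finite --- is never established. Your argument starts from a global fixed point $p$ and uses orientation preservation to land in $\mathrm{GL}^+(2,\RR)$, so it addresses only the ``moreover'' clause; and your alternative route explicitly invokes the first assertion (plus the Ker\'ekj\'art\'o--Eilenberg conjugacy into $\mathrm{O}(3,\RR)$) as an input, so it cannot serve as a proof of it. Since the first assertion is the main content of the statement, this is a genuine gap, and you are aware of it (``the genuinely hard input is the finiteness\dots of the first part''), but it is not an external input: it is part of what must be proved.

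The gap is fillable with exactly the tools you already set up. First, your Bochner--Newman injectivity argument uses neither orientation preservation nor the two-dimensionality in an essential way: Schur's theorem applies to any finitely generated periodic subgroup of $\mathrm{GL}(2,\RR)\subset\mathrm{GL}(2,\CC)$, and the linearization-plus-Newman step never mentions orientation. Hence any finitely generated periodic group of $C^1$-diffeomorphisms fixing a point is finite. Second, if $\Gamma$ has a finite orbit, the stabilizer of a point of that orbit has finite index in $\Gamma$, is finitely generated by Schreier's lemma and is periodic, hence is finite by the previous step, and then $\Gamma$ is a finite union of cosets of a finite subgroup, hence finite. This two-step reduction is precisely what the paper does in its proof of Theorem \ref{theo3} (Claim \ref{clai1} plus the derivative representation), where injectivity of $D$ is obtained from ``an isometry is determined by its value and differential at one point'' rather than from your Bochner--Newman argument; it bypasses the paper's original case analysis for orbits of cardinality $1$, $2$ and $\geq 3$ (Propositions \ref{Global}, \ref{card2}, \ref{Cardgeq3}). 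For the part you did prove, your route is genuinely different from the paper's Proposition \ref{Global}: there, abelianity comes from $D(G)$ being conjugate into $\mathrm{SO}(2,\RR)$ together with the fact that a commutator is $C^1$-conjugate to a rotation $R_\alpha$ (the ``folkloric'' Ker\'ekj\'art\'o-type result), so $D([f,g])=\mathrm{Id}$ forces $\alpha=0$; your Bochner--Newman rigidity replaces that surface-specific classification and is what makes the argument dimension-free.
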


\smallskip

In order to relax the differentiability hypothesis, we have as a consequence of the proof of Proposition \ref{Cardgeq3},  the following
 \begin{coro} \label{coroCardgeq3}
  If $G$ is a finitely generated periodic group of orientation preserving homeomorphisms
of $ \mathbb{S}^2$ and   $G$ has a finite orbit of cardinality at least 3, then it is finite.
\end{coro}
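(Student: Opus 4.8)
The plan is to convert the dynamical hypothesis into a combinatorial one via the permutation action on the finite orbit. Write $F$ for the given finite orbit, with $|F|=k\geq 3$. Since $F$ is a $G$-invariant set, $G$ acts on $F$ by bijections, and this yields a homomorphism $\phi\colon G\to\mathrm{Sym}(F)\cong S_k$ into a finite group. Because $|S_k|<\infty$, the group $G$ will be finite as soon as I show that $\phi$ is injective, i.e. that the pointwise stabiliser $K=\ker\phi=\{g\in G:\ g|_F=\mathrm{id}_F\}$ is trivial. Thus the entire problem reduces to controlling the homeomorphisms of $\mathbb{S}^2$ that fix every point of $F$.

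The key step is then to rule out nontrivial elements of $K$. Let $g\in K$ with $g\neq\mathrm{id}$. By hypothesis $g$ is periodic and orientation preserving, so $g$ is a nontrivial finite-order orientation-preserving homeomorphism of $\mathbb{S}^2$. By the classical normal-form theorem of Ker\'ekj\'art\'o (see also Brouwer and Eilenberg), such a homeomorphism is topologically conjugate to a nontrivial rotation of $\mathbb{S}^2$; in particular its fixed-point set is homeomorphic to that of a nontrivial rotation, hence consists of exactly two points. But $g$ fixes each of the $k\geq 3$ points of $F$, so $g$ would have at least three fixed points, a contradiction. Therefore $K=\{\mathrm{id}\}$, $\phi$ is injective, and $G$ is isomorphic to a subgroup of $S_k$, hence finite.

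Two features of the hypotheses are decisive and explain why this statement lives at the purely topological level, in contrast with Theorem \ref{theo1}, where the $C^1$ assumption is used to linearise at a fixed point when the orbit has cardinality $1$ or $2$: the assumption $k\geq 3$ is exactly what contradicts the two-point fixed set, and orientation preservation is essential, since an orientation-reversing periodic homeomorphism may be conjugate to a reflection and thus fix an entire invariant circle. The main obstacle is therefore the invocation of the conjugacy-to-rotation result in the merely topological (non-$C^1$) setting: one must know that a nontrivial periodic orientation-preserving surface homeomorphism cannot fix three points, a fact that for homeomorphisms rests on Ker\'ekj\'art\'o-type normal forms (or, alternatively, on Smith theory for the fixed sets of $\mathbb{Z}/p$-actions) rather than on any local linearisation. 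This is precisely the topological content carried over from the proof of Proposition \ref{Cardgeq3}, whose argument in the case of orbits of cardinality at least $3$ is already independent of the differentiability hypothesis; I would note in passing that finite generation is not even needed for this particular implication.
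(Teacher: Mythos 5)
Your proof is correct and takes essentially the same approach as the paper's proof of Proposition \ref{Cardgeq3}: both rest on the single key fact (Ker\'ekj\'art\'o/Eilenberg, Proposition \ref{isotop} in the paper) that a nontrivial finite-order orientation-preserving homeomorphism of $\mathbb{S}^2$ has exactly two fixed points, so no nontrivial element can fix the $\geq 3$ points of the finite orbit. The only difference is bookkeeping: you package finiteness as injectivity of the permutation representation $G\to\mathrm{Sym}(F)$, whereas the paper proves $\mathrm{St}_G(x_0)$ is finite by a pigeonhole argument and writes $G$ as finitely many cosets of it; your formulation is cleaner and, as you correctly note, makes it plain that finite generation is not actually needed for this statement.
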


\smallskip

In some cases, we are able to establish the existence of a finite orbit, in particular we have the following

\begin{coro} \label{coroeven}
If $G$ is a finitely generated periodic group of spherical diffeomorphisms with even bounded orders then $G$ is finite.
\end{coro}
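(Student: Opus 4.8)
The plan is to reduce the statement to Theorem \ref{theo1} by manufacturing a finite orbit, using the even-order hypothesis to produce involutions with controlled fixed-point sets.

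First I would record a structural consequence of the hypothesis. If every nontrivial element of $G$ has even order, then no element can have order $2m$ with $m$ odd and $m>1$, since such a $g$ would give $g^{2}$ of odd order $m>1$, contradicting the hypothesis. Hence every element of $G$ has order a power of $2$, so $G$ is a finitely generated $2$-group of bounded exponent, and in particular each nontrivial $g\in G$ has a power that is an involution. Next I would reduce to the orientation-preserving case: the subgroup $G^{+}$ of orientation-preserving elements has index at most $2$, is finitely generated, inherits the even-order property, and $G$ is finite as soon as $G^{+}$ is. So I may assume $G=G^{+}$. Now each involution $\iota\in G$ is an orientation-preserving $C^1$-involution of $\SS^2$, hence conjugate to the rotation by $\pi$, and thus has exactly two fixed points; write $\mathrm{Fix}(\iota)=\{p_\iota,q_\iota\}$, the two poles of its axis, which are moreover fixed by every cyclic group $\langle g\rangle$ containing $\iota$. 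By Theorem \ref{theo1} it then suffices to exhibit a single finite $G$-orbit.

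The heart of the argument, and the step I expect to be the main obstacle, is to upgrade these two-point fixed sets to a genuinely $G$-invariant finite set. The natural candidate is the union $\bigcup_{g\in G} g\cdot\{p_\iota,q_\iota\}$ of the fixed pairs of all conjugates $g\iota g^{-1}$ of a chosen involution $\iota$: this set is automatically $G$-invariant, and it is finite precisely when $\iota$ has finitely many conjugates, that is, when the centralizer $C_G(\iota)$ has finite index. I would try to establish this finiteness in one of two ways. The first is a local rigidity argument of Zassenhaus–Jordan type at a fixed point, showing that the $C^1$-germs of the conjugate involutions cannot accumulate while all group elements have uniformly bounded order, so that only finitely many distinct axes can occur. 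The second exploits the $2$-group structure to extract a finite-index normal subgroup $H$ admitting a global fixed point, to which the ``moreover'' part of Theorem \ref{theo1} applies, forcing $H$ to be abelian and hence finite, and therefore $G$ finite.

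Either route produces a finite $G^{+}$-orbit, whence Theorem \ref{theo1} yields that $G^{+}$ is finite, and finally $G$ is finite because $[G:G^{+}]\le 2$. I would expect the genuine work to lie entirely in the finite-orbit step of the previous paragraph; the reductions and the invocation of Theorem \ref{theo1} are formal once that is in place.
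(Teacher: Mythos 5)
Your reductions (passing to the orientation-preserving subgroup of index at most $2$, noting that every involution is a quasi-rotation with exactly two fixed points, and observing that by Theorem \ref{theo1} it suffices to produce a nonempty finite $G$-invariant set) coincide with the paper's, and your candidate set $\bigcup_{g\in G}\mathrm{Fix}(g\iota g^{-1})$ is the right kind of object. But the step you yourself call the heart of the argument --- finiteness of that set --- is precisely what you do not prove, and neither of your two suggested routes works. Route (a): there is no Zassenhaus--Jordan-type rigidity available in this setting. The conjugates $g\iota g^{-1}$ are obtained by conjugating with arbitrary $C^1$-diffeomorphisms $g$, so they form no equicontinuous family and need not admit any uniformly convergent subsequence; hence ``germs cannot accumulate'' arguments (even Newman's theorem, which bounds nontrivial periodic homeomorphisms away from the identity in the uniform metric) never get started, and $\mathrm{Diff}^1(\SS^2)$ is not a Lie group in which a Zassenhaus neighborhood exists. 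Route (b) is circular: extracting a finite-index subgroup of $G$ with a global fixed point is equivalent in difficulty to producing a finite orbit (the stabilizer of a point of a finite orbit has finite index, and conversely), and nothing in the $2$-group structure hands you such a subgroup. (Incidentally, your $2$-group claim is only half-proved: your argument excludes orders $2m$ with $m$ odd, $m>1$, but for an order $2^k m$ with $m$ odd you must apply the same reasoning to $g^{2^k}$.)

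What closes the gap in the paper is an algebraic mechanism absent from your sketch, namely Lemma \ref{carr}. First, any two involutions $\sigma_0,\sigma$ in a periodic group generate a finite dihedral (or cyclic) group, because the product $\sigma_0\sigma$ has finite order; this group contains an involution commuting with \emph{both} generators ($\sigma_0$ itself in the cyclic case, and the central element $(\sigma_0\sigma)^{m/2}$ in the dihedral case $\DD_m$ --- here evenness of the orders is used). Second, all involutions commuting with a fixed involution $\sigma_0$ preserve the two-point set $\mathrm{Fix}(\sigma_0)$, so any finitely many of them generate a group that is finite by Theorem \ref{theo1} and hence cyclic or dihedral; the resulting increasing chain of cyclic/dihedral subgroups must stabilize because orders are bounded --- this is where the bounded-exponent hypothesis does real work --- so $Z(\sigma_0)\cap Inv(G)$ is finite. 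A pigeonhole argument combining these two facts shows that \emph{all} of $Inv(G)$ is finite, whence $Fix(Inv(G))$ is a nonempty, finite, $G$-invariant set and Theorem \ref{theo1} applies. Your proposal has the correct skeleton, but it is missing exactly this pair of ideas, and without them the finite-orbit step remains unproved.
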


\medskip

Let $\Gamma$ be a finite group,  by a classical result of Kerekjarto and Eilenberg  (see \cite{CK},\cite{Ei}, \cite{Ep}, \cite{K}), every
topological action of $\Gamma$  on the 2-sphere is conjugate to an orthogonal action (i.e by orthogonal maps of $\mathrm{O}(3,\RR)$).

Moreover, as it is explained in  \cite{MS}, if the finite group $\Gamma$ acts smoothly on a closed surface, $\Gamma$ leaves invariant some Riemannian metric of
constant curvature. Thus any action of $\Gamma$ on the 2-sphere is
conjugate in  $\mathrm{Diff} ^1(\SS^2)$  to an orthogonal  action. We will refer to this result as the ``folkloric" one.

Combining these results with Theorem \ref{theo1}  and Corollary  \ref{coroCardgeq3} we have the following rigidity result.
\begin{coro}
Let $\Gamma$ be a finitely generated periodic group.

1. If  $\Gamma$  acts by $C^1$-diffeomorphisms of $ \mathbb{S}^2$ with a finite orbit then it is conjugate in  $ {\text {\em Diff }} ^1(\SS^2)$  to a finite orthogonal group, that is a finite subgroup of $\mathrm{O}(3,\RR)$.

2. If $\Gamma$  acts by  orientation preserving homeomorphisms
of $ \mathbb{S}^2$ with a finite orbit of cardinality at least 3, then it is  conjugate   to a finite orthogonal group.
\end{coro}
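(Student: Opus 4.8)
The plan is to derive this rigidity statement directly by combining the finiteness results already established with the classical (``folkloric'') linearization theorems quoted above. The substantive work is entirely contained in Theorem \ref{theo1} and Corollary \ref{coroCardgeq3}; the corollary is then a matter of assembling these with Kerekjarto--Eilenberg rigidity, so I do not expect any genuine obstacle, only the need to keep careful track of the regularity class in which the conjugation is produced.

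For part 1, I would first invoke Theorem \ref{theo1}: since $\Gamma$ is finitely generated, periodic, and acts by $C^1$-diffeomorphisms of $\SS^2$ with a finite orbit, it is finite. Now $\Gamma$ is a finite group acting smoothly on $\SS^2$, so the smooth version of the folkloric result applies: $\Gamma$ leaves invariant a Riemannian metric of constant curvature (obtained by averaging an arbitrary metric over the finite group), and consequently the action is conjugate in $\mathrm{Diff}^1(\SS^2)$ to an action by isometries of the round sphere, that is, to a finite subgroup of $\mathrm{O}(3,\RR)$. This is exactly the asserted conclusion, produced in the $C^1$ category.

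For part 2, the argument is parallel but in the topological category. I would apply Corollary \ref{coroCardgeq3}: since $\Gamma$ is finitely generated, periodic, acts by orientation-preserving homeomorphisms of $\SS^2$, and admits a finite orbit of cardinality at least $3$, it is finite. Then the topological Kerekjarto--Eilenberg theorem, which states that every finite group acting by homeomorphisms of the $2$-sphere is topologically conjugate to an orthogonal action, yields that $\Gamma$ is conjugate to a finite subgroup of $\mathrm{O}(3,\RR)$.

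The only point requiring care --- and the closest thing to an obstacle --- is respecting the distinction between the two categories: in part 1 the hypothesis is $C^1$-smoothness, and one must use the smooth averaging argument to obtain a genuinely $C^1$-conjugacy, whereas in part 2 only continuity is assumed and the weaker topological conclusion (conjugacy by a homeomorphism) is all that Kerekjarto--Eilenberg provides. No further estimates or constructions are needed.
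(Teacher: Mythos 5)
Your proposal is correct and is exactly the argument the paper intends: the paper introduces this corollary with the words ``Combining these results with Theorem \ref{theo1} and Corollary \ref{coroCardgeq3}'', i.e.\ finiteness comes from those two results and the conjugacy to a subgroup of $\mathrm{O}(3,\RR)$ comes from the Kerekjarto--Eilenberg theorem in the topological case and the ``folkloric'' metric-averaging result in the $C^1$ case. Your care in distinguishing the two regularity categories matches the paper's own distinction (Remark \ref{relrk}), so there is nothing to add.
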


\medskip

We note that the only "interesting compact surfaces" for studying Burnside Problem are the sphere and the torus. In section 5 we will prove
 \begin{theo}
\label{theo2}
Any finitely generated periodic group acting by homeomorphisms on a compact orientable surface $S$ of genus $g \geq 2$, is a finite group.

\end{theo}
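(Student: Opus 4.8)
The plan is to pass from homeomorphisms to their isotopy classes and to exploit the two fundamental features of a surface $S$ of genus $g\geq 2$: its mapping class group is virtually torsion-free, and its negative Euler characteristic rigidifies periodic maps. Let $\rho:\Gamma\to \mathrm{MCG}(S)$ be the homomorphism sending a homeomorphism to its isotopy class, where $\mathrm{MCG}(S)$ denotes the mapping class group with orientation-reversing classes allowed, so that $\mathrm{MCG}(S)\cong\mathrm{Out}(\pi_1 S)$ by the Dehn--Nielsen--Baer theorem. Write $K=\ker\rho$ for the subgroup of elements of $\Gamma$ isotopic to the identity. I would prove the theorem by showing separately that the image $\rho(\Gamma)$ is finite and that the kernel $K$ is trivial.

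For the image, I would use that $\mathrm{MCG}(S)$ contains a torsion-free subgroup of finite index, for instance the kernel of the action on $H_1(S;\ZZ/3\ZZ)$, which is torsion-free by the surface analogue of Minkowski's lemma. The general principle is then: a periodic subgroup $G$ of a virtually torsion-free group $H$ is finite. Indeed, if $N\trianglelefteq H$ is torsion-free of finite index (pass to the normal core if necessary), then $G\cap N$ is simultaneously periodic and torsion-free, hence trivial, so $G$ embeds in the finite group $H/N$. Applying this to $G=\rho(\Gamma)\leq H=\mathrm{MCG}(S)$ shows that $\rho(\Gamma)$ is finite; in particular $K$ has finite index in $\Gamma$ and is therefore itself finitely generated and periodic.

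It remains to treat $K$. Every element $f\in K$ has finite order and is isotopic to the identity, and I would invoke the classical fact that on a closed surface with $\chi(S)<0$ such a map must actually be the identity. Concretely, the finite cyclic group $\langle f\rangle$ can be realized by isometries of a hyperbolic metric on $S$ (Nielsen realization in the cyclic case), so $f$ is conjugate to an isometry $g$ which is again isotopic to the identity. Since the isometry group of a closed hyperbolic surface injects into $\mathrm{MCG}(S)\cong\mathrm{Out}(\pi_1 S)$ --- a nontrivial isometry induces a nontrivial outer automorphism, because the centralizer of a cocompact non-elementary Fuchsian group in $\mathrm{Isom}(\HH^2)$ is trivial --- the isometry $g$, and hence $f$, is the identity. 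Thus $K=\{1\}$ and $\Gamma\cong\rho(\Gamma)$ is finite.

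The main obstacle is the rigidity input for the kernel: establishing that a periodic homeomorphism isotopic to the identity is genuinely the identity, not merely isotopically trivial. This rests on the Nielsen realization of finite cyclic actions and on the faithfulness of $\mathrm{Isom}(S)\hookrightarrow\mathrm{MCG}(S)$; once these are in place, finite generation is used only to guarantee that $K$ is finitely generated, although the argument in fact shows $K$ is trivial regardless.
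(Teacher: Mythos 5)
Your overall architecture is sound and your conclusion is correct, but you reach it by a route that genuinely differs from the paper's in the image step, and there is one citation error in your kernel step that needs repair. The paper does not map to the mapping class group at all: it uses the homology representation $\phi:\Gamma\to \mathrm{GL}(2g,\ZZ)$, gets finiteness of $\phi(\Gamma)$ from Schur's theorem on finitely generated periodic linear groups, and then needs the theorem that the Torelli group is torsion-free (Farb--Margalit, Theorem 6.12) to promote ``trivial on homology and of finite order'' to ``isotopic to the identity''; from there it concludes exactly as you do, via the facts that a finite-order homeomorphism of $S$ is an isometry of some hyperbolic metric (Epstein, Stillwell) and that an isometry isotopic to the identity is the identity. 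Your version replaces Schur plus the Torelli theorem by a single input, the virtual torsion-freeness of $\mathrm{MCG}(S)$ (Serre's theorem that the level-$3$ congruence subgroup is torsion-free), and this buys something real: a periodic subgroup of a virtually torsion-free group embeds in the finite quotient, so finiteness of $\rho(\Gamma)$ --- and hence of $\Gamma$ --- needs no finite generation hypothesis whatsoever, whereas the paper's use of Schur does. One small point: Serre's theorem is usually stated for the orientation-preserving mapping class group; to apply it to $\mathrm{MCG}^{\pm}(S)$ either pass first to the index-at-most-$2$ orientation-preserving subgroup of $\Gamma$, or note that an orientation-reversing class cannot act trivially on $H_1(S;\ZZ/3\ZZ)$ since it negates the (nondegenerate) intersection form.

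The genuine flaw is in your justification of the kernel step. For $f\in K$ you need the statement: \emph{the actual finite-order homeomorphism $f$ is conjugate to an isometry of a hyperbolic metric.} Nielsen realization (in the cyclic case or otherwise) cannot deliver this, because it is a statement about finite subgroups of $\mathrm{MCG}(S)$, and the subgroup generated by $[f]$ is \emph{trivial} here --- that is the whole point of $f$ lying in $K$ --- so realization applied to it says nothing about $f$. The fact you need is a different classical theorem, the one the paper cites: every finite-order homeomorphism of a compact surface of genus $g\geq 2$ preserves some Riemannian metric of constant curvature $-1$ (Epstein; see also Stillwell). With that citation substituted, your centralizer argument (a lift of an isometry isotopic to the identity centralizes the cocompact Fuchsian group, whose centralizer in $\mathrm{Isom}(\HH^2)$ is trivial) is correct and is essentially the proof of Proposition 7.7 in Farb--Margalit, which is what the paper invokes.
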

As it can be verified in Section 5,  this kind of results for orientable compact surfaces are consequences of well known results.
One of them (see \cite{Ep}) is that any periodic homeomorphism in any compact surface preserves a Riemannian metric of constant curvature. By Killing-Hopf Theorem (see, for example section 6.2 of \cite{St}) the curvature is -1 if the genus of the surface is greater than one and it is 0 for the torus and it is 1 for the sphere.

%Note that any periodic diffeomorphism on a compact manifold preserves a Riemanian metric.
As Katherine Mann showed us, the proof of Theorem \ref{theo1} can be extended to any manifold, so we have the following
\begin{theo}
\label{theo3} \

 Let $M$ be a  compact riemannian  manifold and let $\Gamma$ be a finitely generated periodic group of $C^1$-diffeomorphisms of $M$. If $\Gamma$ acts on $M$ with a finite orbit or preserves a finite union of circles  then  $\Gamma$ is finite. \end{theo}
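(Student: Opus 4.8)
The plan is to reduce, in both cases, to a finitely generated periodic group fixing a single point, and then to linearise the action at that point so that Schur's theorem applies. Throughout I would assume $M$ connected; otherwise $\Gamma$ permutes the finitely many components and one passes to the finite-index subgroup preserving each, and since a single finite orbit (or a finite union of circles) constrains only the components it meets, connectedness is the natural hypothesis here.

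First I would manufacture a global fixed point for a finite-index subgroup. If $\Gamma$ has a finite orbit $O$, the action on $O$ gives a homomorphism $\Gamma \to \mathrm{Sym}(O)$ whose kernel $\Gamma'$ has finite index and fixes every point of $O$, so any $p \in O$ is a global fixed point of $\Gamma'$. If instead $\Gamma$ preserves a finite union of circles, then after passing to a finite-index subgroup I may assume each circle $C_i$ is preserved setwise. Restriction to one of them, $C_1$, yields a homomorphism into $\mathrm{Homeo}(C_1)$ whose image is finitely generated and periodic, hence finite by the H\"older corollary quoted in the introduction; its kernel $\Gamma'$ fixes $C_1$ pointwise, so again any $p \in C_1$ is a global fixed point of $\Gamma'$. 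In either case $\Gamma'$ is a finitely generated (being of finite index in a finitely generated group) periodic group with a global fixed point $p$, and $\Gamma$ is finite if and only if $\Gamma'$ is.

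Next I would linearise. The derivative at $p$ defines, via the chain rule, a homomorphism $D_p \colon \Gamma' \to \mathrm{GL}(T_pM) \cong \mathrm{GL}(n,\RR) \subset \mathrm{GL}(n,\CC)$, $g \mapsto Dg(p)$. Its image is a finitely generated periodic subgroup of $\mathrm{GL}(n,\CC)$, hence finite by Schur's theorem. It therefore suffices to show that the kernel $K = \{g \in \Gamma' : Dg(p) = \mathrm{id}\}$ is trivial, for then $\Gamma'$ embeds in the finite group $D_p(\Gamma')$.

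The heart of the matter, and the step I expect to be the main obstacle, is this kernel-triviality. Let $g \in K$, say $g^n = \mathrm{id}$, with $g(p) = p$ and $A := Dg(p) = \mathrm{id}$. In a chart centred at $p$ I would form the Bochner average $\phi = \tfrac1n \sum_{j=0}^{n-1} A^{-j} \circ g^j$; using $A^n = D(g^n)(p) = \mathrm{id}$ one checks that $\phi \circ g = A \circ \phi = \phi$ near $p$, and since $D\phi(p) = \mathrm{id}$ the map $\phi$ is a local diffeomorphism, whence $g = \mathrm{id}$ on a neighbourhood of $p$. To globalise, let $U$ be the interior of the fixed-point set of $g$: it is open, nonempty, and $Dg \equiv \mathrm{id}$ on $U$. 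If $x \in \partial U$ then $g(x) = x$ (fixed sets being closed) and, by continuity of $Dg$, $Dg(x) = \mathrm{id}$, so the same Bochner argument at $x$ forces $g = \mathrm{id}$ near $x$ and hence $x \in U$ --- a contradiction. Thus $U$ is closed as well as open, so $U = M$ by connectedness and $g = \mathrm{id}$. The delicate point is precisely that $g$ is only $C^1$: one cannot invoke analyticity, nor a smooth averaged metric (pulling a metric back by a $C^1$ map yields only a continuous one), which is why Bochner's linearisation, valid in class $C^1$, together with continuity of the derivative, is the right tool. With $K$ trivial, $\Gamma'$ is finite, and therefore so is $\Gamma$.
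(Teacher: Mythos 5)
Your proof is correct, and its skeleton --- pass to a finite-index, finitely generated subgroup with a global fixed point, linearise via the derivative homomorphism at that point, and apply Schur's theorem to the image --- is exactly the paper's. The genuine difference is how triviality of the kernel of $D_p$ is established, which, as you say, is the heart of the matter. The paper argues that any $g$ in the kernel has finite order, hence is an isometry of some invariant Riemannian metric, and then invokes the fact that an isometry of a connected Riemannian manifold is determined by its value and differential at a single point. You instead give the Bochner averaging argument ($\phi=\tfrac1n\sum_{j=0}^{n-1}A^{-j}\circ g^{j}$ in a chart, which conjugates $g$ to $A=\mathrm{id}$ near $p$) followed by an open-closed globalisation using continuity of $Dg$. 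Your route is self-contained and handles the $C^1$ regularity honestly: averaging a smooth metric under a finite group of $C^1$ diffeomorphisms produces only a continuous metric, for which ``isometries are determined by their $1$-jet'' is not the standard smooth-category fact; the clean justification of the paper's cited fact in class $C^1$ is essentially your Bochner argument, so your proof fills in a step the paper treats as known. The two preliminary reductions also differ mildly but equivalently: for the finite orbit the paper takes the stabilizer of an orbit point (finite index via the coset decomposition, finitely generated by Schreier's lemma) where you take the kernel of the permutation action on the orbit; for the circles the paper uses H\"older to extract a finite orbit on an invariant circle and feeds it back into case~1, where you take the kernel of the restriction to the circle, which fixes it pointwise.
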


As consequence we get

\begin{coro} \label{coro21} \

%Any $C^1$-action of a 2-group on $S^2$ factors through a finite (cyclic or diedral) group.

Let $\Gamma$ be a finitely generated periodic group of orientation preserving $C^1$-diffeomorphisms of $\SS^2$. If $\Gamma$ has  a non trivial center $Z(\Gamma)$  then  $\Gamma$ is finite.
\end{coro}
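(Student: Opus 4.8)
The plan is to use a nontrivial central element to manufacture a finite orbit and then quote Theorem~\ref{theo1}. First I would choose a nontrivial element $h\in Z(\Gamma)$. Since $\Gamma$ is periodic, $h$ has finite order, so $\langle h\rangle$ is a finite cyclic group of orientation preserving $C^1$-diffeomorphisms of $\SS^2$. By the folkloric result quoted above (Kerekjarto--Eilenberg), the action of $\langle h\rangle$ is conjugate to an orthogonal one, so $h$ is topologically conjugate to a nontrivial element of $\mathrm{SO}(3,\R)$, that is, to a rotation about some axis. Such a rotation fixes exactly the two endpoints of its axis, hence $\mathrm{Fix}(h)$ consists of exactly two points, say $\{p,q\}$, and in particular it is a nonempty finite set.

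The key step is then the commutation argument: because $h$ lies in the center, every $g\in\Gamma$ satisfies $gh=hg$, and therefore $g$ preserves $\mathrm{Fix}(h)$. Indeed, if $h(x)=x$ then $h(g(x))=g(h(x))=g(x)$, so $g(x)\in\mathrm{Fix}(h)$. Consequently the two-point set $\{p,q\}$ is invariant under all of $\Gamma$, and the $\Gamma$-orbit of $p$ is contained in $\{p,q\}$. Thus $\Gamma$ admits a finite orbit, of cardinality $1$ or $2$.

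Finally I would apply Theorem~\ref{theo1} to this finite orbit to conclude that $\Gamma$ is finite. The only point requiring care is the claim that the fixed-point set of the central element is finite and nonempty; this is exactly where the orientation preserving hypothesis enters, since it forces $h$ to be conjugate to a rotation (with precisely two fixed points) rather than, for instance, a reflection whose fixed set is a circle. Once this finite orbit has been located, no real obstacle remains, the conclusion being an immediate consequence of Theorem~\ref{theo1}. Alternatively, one could invoke Theorem~\ref{theo3} with the same finite orbit.
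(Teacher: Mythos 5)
Your proposal is correct and follows essentially the same route as the paper: take a nontrivial central element, observe that its fixed-point set is $\Gamma$-invariant, identify that set via the classification of finite order orientation preserving spherical maps (two points, since the element is a quasi-rotation), and invoke the finite-orbit theorem. The only cosmetic difference is that the paper phrases the classification as ``finite or a circle'' and quotes Theorem~\ref{theo3} (which also covers the invariant-circle case), whereas you use orientation preservation to pin down exactly two fixed points and apply Theorem~\ref{theo1} directly; both are valid.
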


\medskip

Theorem \ref{theo1} will be proved in sections 2, 3, 4 and section 5 is devoted to orientable compact surfaces, in particular this section contains the proofs of Corollary \ref{Ann} and Theorem \ref{theo2}. Theorem \ref{theo3} and Corollary \ref{coro21}  will be proved in section 6.  Corollary \ref{coroeven}  will be proved in section 7.

\medskip

\noindent{\bf Acknowledgements.} We are grateful to Andr\'es Navas for proposing to us this subject and for fruitful discussions.  We thank K. Parwani for suggesting us the Burnside problem on the closed annulus and the closed disk. We also thanks to F. Leroux, J. Franks and K. Mann for several useful suggestions.

 \section{Classification of finite order orientation preserving spherical homeomorphisms.}

\smallskip

In this section, we recall the classification of finite order orientation preserving spherical homeomorphisms of sphere
up to conjugacy.

According to \cite{CK}, \cite{Ei}, \cite{Ep} or \cite{K}, a finite order spherical homeomorphism  is conjugate to an orthogonal map of $\mathrm{O}(3,\RR)$ and using the classification of elements in the orthogonal group $\mathrm{O}(3,\RR)$, wet get the following  definition and  proposition.

%\begin{enumerate}
%\item Orientation preserving isometry:
%\begin{enumerate}
%\item rotation about an axis through the origin.
%\end{enumerate}
%\item Orientation reversing isometry:
%\begin{enumerate}
%\item reflection in a plane through the origin,
%\item inversion in the origin,
%\item rotation about an axis combined with  reflection in the plane through the origin.
%\end{enumerate}
%\end{enumerate}
% Therefore  a finite order, orientation preserving spherical homeomorphism is  conjugated to a rational rotation  and

\begin{defi} A spherical  homeomorphism is called  {\bf quasi-rotation}  if it is
conjugated to a spherical rotation. \end{defi}

\begin{prop}\label{isotop}

A finite order, orientation preserving spherical homeomorphism is a quasi-rotation
%conjugated to a rational rotation
and then if it is non trivial it has exactly two fixed points.
 \end{prop}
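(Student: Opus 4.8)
The plan is to reduce the statement to elementary linear algebra by invoking the Kerekjarto--Eilenberg theorem quoted just above. First I would apply that theorem to conjugate the given finite order homeomorphism $h$ of $\SS^2$ to an orthogonal map $A \in \mathrm{O}(3,\RR)$; since conjugacy is an equivalence relation and the property of being a quasi-rotation is defined up to conjugacy, it suffices to analyse $A$. The two remaining tasks are then (i) to show that orientation preservation forces $A \in \mathrm{SO}(3,\RR)$, and (ii) to use the classification of $\mathrm{SO}(3,\RR)$ to conclude that $A$ is a rotation and to count its fixed points.

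For task (i), the key observation is that for $A \in \mathrm{O}(3,\RR)$ the degree of the induced self-map of $\SS^2$ equals $\det A$, so $A$ preserves the orientation of $\SS^2$ exactly when $\det A = +1$, that is, when $A \in \mathrm{SO}(3,\RR)$. Since $h$ is orientation preserving and conjugacy preserves degree, the matrix $A$ has degree $+1$ on $\SS^2$ and hence lies in $\mathrm{SO}(3,\RR)$.

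For task (ii), I would invoke Euler's rotation theorem: every element of $\mathrm{SO}(3,\RR)$ is a rotation about some axis $\ell$ through the origin, that is, a spherical rotation. Thus $A$ is a spherical rotation and $h$, being conjugate to $A$, is a quasi-rotation. For the fixed point count, a point $x \in \SS^2$ is fixed by a rotation $A \neq \mathrm{id}$ if and only if $x$ lies in the $1$-eigenspace of $A$; for a nontrivial rotation this eigenspace is exactly the axis $\ell$ (this holds even for the half-turn with angle $\pi$, whose eigenvalues are $1,-1,-1$), and $\ell$ meets $\SS^2$ in the two antipodal poles. Hence $A$, and therefore $h$, has exactly two fixed points.

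The only genuinely non-elementary input is the Kerekjarto--Eilenberg conjugacy result, which is assumed; the main point requiring care is the degree--orientation computation in task (i), since one must rule out the orientation-reversing elements of $\mathrm{O}(3,\RR)$ (reflections, improper rotations, and the antipodal map $-\mathrm{id}$, the last of which is fixed-point free and would otherwise break the two-fixed-point conclusion).
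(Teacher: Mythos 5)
Your proposal is correct and follows essentially the same route as the paper, which likewise invokes the Kerekjarto--Eilenberg theorem (\cite{CK}, \cite{Ei}, \cite{Ep}, \cite{K}) to conjugate the homeomorphism to an element of $\mathrm{O}(3,\RR)$ and then appeals to the classification of orthogonal maps. Your write-up simply makes explicit the linear-algebra details (degree equals determinant, Euler's rotation theorem, the $1$-eigenspace giving exactly two antipodal fixed points) that the paper leaves implicit.
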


\begin{rema}\label{relrk} The ``folkloric result"  states that if the homeomorphism is a $C^1$-diffeomorphism then the conjugating map is also a $C^1$-diffeomorphism.
\end{rema}

%\begin{defi}
%Let $\Gamma $ be a group of spherical homeomorphisms, we denoted  by $\mathbf {G}$ the subgroup $ \{g\in \Gamma : g$ is %orientation preserving  $\}$. \end{defi}

As a corollary of Proposition \ref{isotop}, we get

\begin{coro}\label{pseudo}

If $\Gamma$ is a periodic group, then $\mathbf {G}$ the subgroup $ \{g\in \Gamma : g$ is orientation preserving  $\}$ is exactly the set $\{g\in \Gamma  : g$ is a quasi-rotation  $\}$. In particular, the subset of $\Gamma$ consisting in quasi-rotations is a subgroup.
\end{coro}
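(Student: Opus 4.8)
The plan is to introduce the set $Q = \{g \in \Gamma : g \text{ is a quasi-rotation}\}$ and prove the double inclusion $\mathbf{G} \subseteq Q$ and $Q \subseteq \mathbf{G}$; the ``subgroup'' assertion will then come for free once equality with $\mathbf{G}$ is established.

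For the inclusion $\mathbf{G} \subseteq Q$, I would simply invoke periodicity together with Proposition \ref{isotop}. Every $g \in \mathbf{G}$ is, by definition of $\mathbf{G}$, an orientation preserving spherical homeomorphism, and since $\Gamma$ is periodic, $g$ has finite order. Thus $g$ satisfies exactly the hypotheses of Proposition \ref{isotop}, which yields that $g$ is a quasi-rotation, i.e.\ $g \in Q$. This direction uses no new ideas beyond the proposition already proved.

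For the converse inclusion $Q \subseteq \mathbf{G}$, the point is to check that a quasi-rotation is necessarily orientation preserving. Here I would use that assigning to a homeomorphism of $\SS^2$ its orientation type defines a homomorphism $\mathrm{Homeo}(\SS^2) \to \{\pm 1\}$ (equivalently, the degree, which is multiplicative). A quasi-rotation is by definition of the form $g = h r h^{-1}$ with $r$ a spherical rotation (hence orientation preserving, of degree $+1$) and $h$ an arbitrary homeomorphism. Since the target group $\{\pm 1\}$ is abelian and $\deg(h)\deg(h^{-1}) = 1$, we get $\deg(g) = \deg(h)\deg(r)\deg(h^{-1}) = \deg(r) = +1$, independently of whether $h$ preserves or reverses orientation. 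Hence $g$ is orientation preserving, i.e.\ $g \in \mathbf{G}$.

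Combining the two inclusions gives $\mathbf{G} = Q$. Finally, $\mathbf{G}$ is precisely the kernel of the orientation homomorphism restricted to $\Gamma$, so it is a subgroup of $\Gamma$; therefore $Q$ is a subgroup as well, which is the last assertion. I expect the only genuinely delicate point to be the converse inclusion, specifically the observation that conjugation by an \emph{orientation-reversing} homeomorphism still cannot change an orientation preserving map into an orientation reversing one. This is exactly the statement that orientation type is a conjugacy invariant, which is what the multiplicativity of the degree provides.
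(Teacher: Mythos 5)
Your proof is correct and follows the same route the paper intends: the paper states this as an immediate consequence of Proposition \ref{isotop}, with the forward inclusion given by periodicity plus that proposition, and the reverse inclusion by the (implicit) observation that a conjugate of a rotation is orientation preserving since orientation type is a conjugacy invariant. Your degree argument simply makes that last observation explicit, so there is no genuine difference in approach.
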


\section{Reduction to  groups of rational quasi-rotations.}

\smallskip

A direct consequence of the fact that the composition of two orientation reversing homeomorphisms is  orientation preserving, is

\begin{lemm}\label{finiteindex}

Let $\Gamma$ be a finitely generated periodic group of  spherical  homeomorphisms. Then $G$, the subgroup of $\Gamma$  consisting  in quasi-rotations is of finite index in $\Gamma$. Hence, $\Gamma$ is finite if and only if  $G$ is finite. \end{lemm}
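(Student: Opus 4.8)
The plan is to realize $G$ as the kernel of the natural orientation homomorphism on $\Gamma$, whose target has order two. First I would recall from Corollary \ref{pseudo} that, because $\Gamma$ is periodic, the set $G$ of quasi-rotations coincides with $\mathbf{G} = \{g \in \Gamma : g \text{ is orientation preserving}\}$: indeed every orientation-preserving element of $\Gamma$ has finite order, hence is a quasi-rotation by Proposition \ref{isotop}, and conversely a quasi-rotation, being conjugate to a rotation, preserves orientation. Thus it suffices to bound the index of $\mathbf{G}$ in $\Gamma$.

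Next I would introduce the orientation map $\epsilon : \Gamma \to \{+1, -1\}$ assigning $+1$ to orientation-preserving homeomorphisms and $-1$ to orientation-reversing ones. The composition rules for orientation — preserving $\circ$ preserving and reversing $\circ$ reversing are both preserving, while a preserving map composed with a reversing one is reversing — say precisely that $\epsilon$ is a group homomorphism into the multiplicative group $\{+1,-1\} \cong \mathbb{Z}/2\mathbb{Z}$. Its kernel is exactly $\mathbf{G} = G$, so $\Gamma / G$ embeds into $\mathbb{Z}/2\mathbb{Z}$ and consequently $[\Gamma : G] \leq 2$; in particular $G$ has finite index in $\Gamma$.

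Finally, for the stated equivalence, since $[\Gamma : G]$ is finite one has $|\Gamma| = [\Gamma : G] \cdot |G|$ (with the convention that the product is infinite as soon as one factor is), so $\Gamma$ is finite precisely when $G$ is. I do not expect any genuine obstacle here: the only point requiring the standing hypotheses is the identification $G = \mathbf{G}$, which uses periodicity through Proposition \ref{isotop}, whereas finite generation plays no role in this particular reduction. The entire content of the lemma is that passing to the orientation-preserving subgroup costs at most a factor of two, thereby reducing the finiteness question for $\Gamma$ to that for the group $G$ of quasi-rotations.
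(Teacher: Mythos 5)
Your proof is correct and follows essentially the same route as the paper: the paper's one-line argument (that composing two orientation-reversing homeomorphisms gives an orientation-preserving one) is exactly the statement that your orientation map $\epsilon$ is a homomorphism with kernel $G$, combined with the identification of quasi-rotations with orientation-preserving elements from Corollary \ref{pseudo}. Your write-up just makes the index bound $[\Gamma : G] \leq 2$ and the role of periodicity explicit, which the paper leaves implicit.
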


For proving Theorem \ref{theo1}, it is enough to prove that $G$ is finite. This is the purpose of the following section.
Our proof consists in considering the following three cases: the case where $G$ has a global fixed point achieved in Proposition \ref{Global},
the case where $G$ has a finite orbit of cardinality 2 proven in  Proposition \ref{card2} and the remaining case where $G$ has a
finite orbit of cardinality at least 3 showed in Proposition \ref{Cardgeq3}.

\section{Burnside problem for groups of rational quasi-rotations.}

\smallskip

Let $G$ be a finitely generated periodic group of quasi-rotations of the sphere.

\begin{defi}
We denote by $\mathbf {P_G}$ the set of points in $\mathbb S^2$ that arise as fixed points of some non trivial element of $G$.

Let $x\in P_G$, we denote by $\mathbf { St_G }(x) $ the {\bf stabilizer} in $G$ of $x$, that is the set:
 $$\mathbf { St_G }(x) =\{g\in G : g(x)=x \}.$$ \end{defi}

\begin{lemm}\label{properties}

The set $P_G$ is $G$-invariant and $\displaystyle G=\bigcup_{x\in P_G} \mathrm{St}_G (x)$.
\end{lemm}

\begin{proof}

Let $x_0\in P_G$ and $g\in G$. By definition, there exists $f\in G$ such that $f(x_0)= x_0$. As $g\circ f\circ g^{-1} (g(x_0)) =g\circ f(x_0) = g(x_0)$, it follows that $g(x_0) \in P_G$.

The second point is a direct consequence of the fact that any element in $G$ admits a fixed point (every non trivial quasi-rotation has exactly two fixed points). \end{proof}

\medskip

\subsection{Groups acting with a  global fixed point.} \

\smallskip

 \begin{defi} Let $x\in \SS^2$, we define the following groups:

 ${\text{\bf Diff}} ^{1} _+ (\SS^2)$ consisting in  orientation preserving $C^1$ spherical diffeomorphisms,

 ${\text{\bf Diff}}^1_{x}(\SS^2)$ consisting in  $C^1$ spherical diffeomorphisms fixing $x$ and

${\text {\bf Diff}}^{1}_{x,+} (\SS^2)$ their intersection subgroup.

    \end{defi}

  \medskip

This section contains results on finitely generated periodic subgroups of $\mathrm{ Diff}^1_{x,+} (\SS^2)$. Furthermore, in the next section, we will apply these results in order to describe stabilizers of points that might not be finitely generated.

 \medskip

 \begin{defi} Define the map $\mathbf {D}: \mathrm{Diff}^1_{x} (\SS^2) \rightarrow \mathrm{GL}(2,\R)$ by
  $$D(g) = Dg(x) : \RR^2 \approx T_x \SS^2\rightarrow \RR^2 \approx T_x \SS^2, \text{ the differential map at $x$}.$$

\end{defi}

\medskip

\begin{lemm}\label{Diff}
The map $D$ is a morphism and the image of a periodic subgroup of $\mathrm{Diff}^1_{x,+} (\SS^2)$ is a  periodic subgroup of $\mathrm{SL}(2,\R)$.
\end{lemm}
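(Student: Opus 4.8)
The plan is to treat the two assertions in turn, both of which reduce to the chain rule together with elementary facts about periodic matrices. First I would verify that $D$ is a group homomorphism. Given $g,h\in\mathrm{Diff}^1_{x}(\SS^2)$, both fix $x$, so in particular $h(x)=x$. The chain rule then gives
$$D(g\circ h)=D(g\circ h)(x)=Dg(h(x))\circ Dh(x)=Dg(x)\circ Dh(x)=D(g)\circ D(h),$$
and since $D(\mathrm{id})$ is the identity linear map, $D$ is a morphism into $\mathrm{GL}(2,\R)$. This part is entirely routine; the only point to keep in mind is that the evaluation of $Dg$ is taken at $h(x)=x$, which is exactly why fixing the common point $x$ is needed.

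Next I would pin down the image of a periodic subgroup $H\subset\mathrm{Diff}^1_{x,+}(\SS^2)$. For any $g\in H$ there is a positive integer $n$ with $g^n=\mathrm{id}$, and applying the morphism property yields $D(g)^n=D(g^n)=D(\mathrm{id})=\mathrm{Id}$. Thus every element $D(g)$ of the image is a matrix of finite order, which already shows that $D(H)$ is a periodic subgroup of $\mathrm{GL}(2,\R)$.

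It remains to upgrade $\mathrm{GL}$ to $\mathrm{SL}$, and this is the one place where the two hypotheses genuinely combine. Since $g$ is orientation preserving we have $\det D(g)=\det Dg(x)>0$. On the other hand, from $D(g)^n=\mathrm{Id}$ we get $(\det D(g))^n=1$, so the real number $\det D(g)$ is an $n$-th root of unity, forcing $\det D(g)=\pm1$. Combining the two gives $\det D(g)=1$, so $D(g)\in\mathrm{SL}(2,\R)$. Hence $D(H)$ is a periodic subgroup of $\mathrm{SL}(2,\R)$, as claimed. The main (and only) subtlety is precisely this determinant computation: neither orientation preservation alone (which only gives a positive determinant) nor periodicity alone (which only gives determinant $\pm1$) suffices, but together they pin the determinant to $1$.
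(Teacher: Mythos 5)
Your proof is correct, and for the key step it takes a genuinely different — and more elementary — route than the paper. The morphism and periodicity parts coincide with the paper's argument (chain rule at the common fixed point, and $D(g)^n=D(g^n)=\mathrm{Id}$). Where you diverge is in showing $\det D(g)=1$: the paper invokes its ``folkloric result'' (Proposition \ref{isotop} together with Remark \ref{relrk}), writing $g=h^{-1}R_\alpha h$ with $h$ a $C^1$ diffeomorphism and $R_\alpha$ a spherical rotation, then adjusting $h$ by a rotation $R_\beta$ so that both conjugator and rotation fix $x$, and finally computing $\det D(g)=\det D(R_\alpha)=1$ since $D(R_\alpha)$ is a planar rotation. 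You instead observe that orientation preservation forces $\det Dg(x)>0$, while finite order forces $(\det D(g))^n=1$, so the real number $\det D(g)$ equals $\pm 1$, and the two facts together pin it to $1$. Your argument buys self-containedness and generality: it avoids the nontrivial Kerekjarto--Eilenberg--Epstein classification of finite-order spherical diffeomorphisms entirely, and it works verbatim in $\mathrm{Diff}^1$ of any oriented manifold of any dimension (giving image in $\mathrm{SL}(n,\R)$), which is essentially the observation behind the paper's Theorem \ref{theo3}. What the paper's heavier route buys is extra structural information — $D(g)$ is actually conjugate to a planar rotation — but that information is not needed for the lemma as stated (the paper re-derives what it needs later, in Proposition \ref{Global}, from Schur's theorem and the classification of compact subgroups of $\mathrm{SL}(2,\R)$), so your shortcut loses nothing.
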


\begin{proof}

As $D(fg) (x) = Df(g(x)) Dg(x)= Df(x)Dg(x)$  for any $f$, $g$ in $\mathrm{Diff}^1_{x,+} (\SS^2)$,  $D$ is a morphism. Moreover, $D(g)$ has finite order provided that $g$ has.

\medskip

Let  $g$ be a finite order element in $\mathrm{Diff}^1_{x,+} (\SS^2)$. By Proposition \ref{isotop} and Remark \ref{relrk},   there exist a spherical diffeomorphism $h$ and a spherical rational rotation  $R_\alpha$ such that $g=h^{-1} R_\alpha h $. Without loss of generality, we can assume that $h(x)=x$ and $R_\alpha (x)=x$.

\smallskip

Indeed, if not  $y:=h(x)\not=x$. There exists a spherical  rotation  $R_\beta$ such that $R_\beta(y) =x$ and therefore $ R_\beta h (x)=x$ and we  can rewrite  $g =( R_\beta  h) ^{-1} (R_\beta R_\alpha R_\beta ^{-1}) (R_\beta h).$

Thus  $g= H^{-1} R_\alpha' H$, where $H=R_\beta h$ fixes $x$ and $R_\alpha'=  R_\beta R_\alpha R_\beta ^{-1}$  is a spherical rotation and   $R_\alpha' (x)  = H g H^{-1}(x)= H g(x) = H(x) =x$.

\medskip

Finally, we have  $D(g)= D(h) ^ {-1} D(R_\alpha)  D(h)$, since $D$ is a morphism. The linear map $D(R_\alpha)$ is the  planar rotation of angle $ \alpha$, then $D(g)$ has determinant equal to $1$ so it belongs to $ \mathrm{SL}(2,\R)$. \end{proof}

\smallskip

%\subsubsection{$G$ has a global fixed point $x$.} \

\begin{prop}\label{Global} Let $G$ be a finitely generated periodic subgroup of  $\mathrm{Diff}_{x,+}^{1} (\SS^2)$.  Then $G$ is finite and abelian.
%More precisely $G$ is conjuguated to the finite cyclic group generated by a rational spherical rotation.
 \end{prop}

\begin{proof}

The set $D(G)$ is a finitely generated periodic subgroup, since it is the image by a morphism of $G$ satisfying these properties.
According to Schur's theorem (\cite{Sh}), as $D(G)$ is a finitely generated periodic subgroup of $ \mathrm{SL}(2,\R)$, it is finite then it is compact.

The classification of compact subgroups of $\mathrm{SL}(2,\R)$ states that $D(G)$ is conjugated to a subgroup of $\mathrm{SO}(2,\R)$ consisting in linear planar rotations (see for example \cite{La}), hence $D(G)$ is abelian.

As a consequence, for any $f,g$ in $G$,  $D([f,g]) = Id$, where $[f,g]=fgf^{-1}g^{-1}$ is the commutator of $f$ and $g$.

\smallskip

Finally, $[f,g]=h R_\alpha h ^{-1}$, since it is a quasi-rotation.
Then  $D([f,g]) = D(h R_\alpha h ^{-1}) =  D(h ) D(R_\alpha) D(h )^{-1} = Id $ and therefore  $D(R_\alpha) =Id$ so  $\alpha=0$ and $R_\alpha=Id$. Hence  $[f,g] = Id$.

Consequently $G$ is a periodic, finitely generated and abelian group. It follows that $G$ finite.  \end{proof}

\subsection{$G$ has a finite orbit of cardinality  $2$} \

\smallskip
An important ingredient in this case is the following lemma concerning stabilizers of points. As a consequence of Proposition \ref{Global}, we have

\begin{lemm}\label{abelian}

Let  $x_0\in P_G$ and $G'$ be a  periodic subgroup of  $\mathrm{Diff}_{x_0,+}^{1} (\SS^2)$, then $G'$ is an abelian group and its elements  have the same two fixed points.  Moreover, any finitely generated subgroup of $G'$ is finite and conjugated to a group of rational spherical rotations of same axis.

In particular, if $G$ is a finitely generated periodic subgroup of  $\mathrm{Diff} _+^{1} (\SS^2)$, the subgroup $G':= \mathrm{St}_G (x_0)$ is an abelian group.
\end{lemm}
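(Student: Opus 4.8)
The plan is to derive everything from Proposition \ref{Global}, the essential subtlety being that $G'$ itself is \emph{not} assumed to be finitely generated (indeed, the stabilizer of a point in a finitely generated group need not be finitely generated). I would therefore apply Proposition \ref{Global} only to finitely generated \emph{sub}groups of $G'$, building up the global conclusions elementwise.

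First I would establish commutativity. Given $f,g\in G'$, the group $\langle f,g\rangle$ they generate is a finitely generated periodic subgroup of $\mathrm{Diff}^1_{x_0,+}(\SS^2)$, hence finite and abelian by Proposition \ref{Global}; in particular $fg=gf$. Since $f,g$ were arbitrary, $G'$ is abelian. The very same argument applied to $\mathrm{St}_G(x_0)$, which is a periodic subgroup of $\mathrm{Diff}^1_{x_0,+}(\SS^2)$, yields the ``in particular'' statement at the end.

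Next I would pin down the fixed points. Each non-trivial $f\in G'$ is, by Proposition \ref{isotop}, a quasi-rotation with exactly two fixed points, one of which is $x_0$; write $\mathrm{Fix}(f)=\{x_0,y_f\}$. For another non-trivial $g\in G'$, commutativity forces $g$ to preserve $\mathrm{Fix}(f)$, since $f(g(p))=g(f(p))=g(p)$ whenever $f(p)=p$. As $g(x_0)=x_0$ and $g$ permutes the two-point set $\{x_0,y_f\}$, necessarily $g(y_f)=y_f$, so $y_f\in\mathrm{Fix}(g)=\{x_0,y_g\}$ and thus $y_f=y_g$. Hence all non-trivial elements of $G'$ share one and the same pair of fixed points $\{x_0,y_0\}$.

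Finally, for a finitely generated subgroup $H\leq G'$, Proposition \ref{Global} gives that $H$ is finite and abelian. By the folkloric result applied to the finite group $H$ acting on $\SS^2$, $H$ is conjugate in $\mathrm{Diff}^1(\SS^2)$ to a subgroup of $\mathrm{O}(3,\R)$; being orientation preserving it lands in $\mathrm{SO}(3,\R)$, and since every element fixes the common pair $\{x_0,y_0\}$, the images are all rotations about a single axis. A finite group of coaxial rotations is cyclic and consists of rational rotations, giving the claimed conjugacy to rational spherical rotations of the same axis. The main obstacle to keep in view is exactly this passage from finitely generated subgroups to all of $G'$: commutativity and the common--fixed--point property cannot be obtained by invoking Proposition \ref{Global} on $G'$ directly, and must instead be deduced pairwise, as above.
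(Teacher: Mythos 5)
Your proof is correct and follows essentially the same route as the paper's: pairwise application of Proposition \ref{Global} to the finitely generated subgroups $\langle f,g\rangle$ gives commutativity, commutativity plus the two-fixed-point property of quasi-rotations gives the common fixed pair, and the folkloric result applied to finitely generated (hence finite) subgroups gives the conjugacy to rational coaxial rotations. Your treatment is, if anything, slightly more detailed than the paper's on the last step, but the underlying argument is identical.
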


\begin{proof}

Let $f$, $g$ in $G'$ . The group $<f,g>$ generated by $f$ and $g$ is a finitely generated periodic subgroup of $\mathrm{Diff} _+^1 (\SS^2)$ that fixes $x_0$. Hence, according to Proposition \ref{Global}, $<f,g>$ is finite and abelian. Consequently, $f$ and $g$ commute.

Moreover, $f g f^{-1} = g$ implies that $Fix (g) = f( Fix g) $. Let $y\not=x$ be the second fixed point of the quasi-rotation $g$. Then $Fix(g) = \{x,y \} = \{ f(x)=x, f(y) \}$, then $f(y)= y$.

\medskip

Any finitely generated subgroup of  $G'$ is abelian and periodic, so it is a finite subgroup of $ \mathrm{Diff} _+^1 (\SS^2)$.
 By  the folkloric result, it is $C^1$-conjugated to a subgroup of rational rotations.
 %(a cyclic finite group).

\end{proof}

\begin{prop}\label{card2}

If $G$ is a finitely generated periodic subgroup of  $C^1$  quasi-rotations. If $G$ has  a finite orbit of cardinality  $2$, then  $G $ is finite.
\end{prop}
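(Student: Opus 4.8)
The plan is to reduce to the global fixed point case already settled in Proposition \ref{Global}. Write $\{a,b\}$ for the finite orbit of cardinality $2$. First I would record a small but crucial observation: since $\{a,b\}$ is $G$-invariant and each $g\in G$ is injective, an element fixing $a$ cannot also send $b$ to $a$, so it must fix $b$ as well. Consequently the permutation action of $G$ on the two-point set $\{a,b\}$ gives a homomorphism $\phi\colon G\to \ZZ/2\ZZ$ whose kernel is precisely $\mathrm{St}_G(a)=\mathrm{St}_G(a)\cap\mathrm{St}_G(b)$, the subgroup of elements fixing both points. Equivalently, by the orbit-stabilizer relation the index $[G:\mathrm{St}_G(a)]$ equals the cardinality of the orbit of $a$, namely $2$.

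The main leverage is now a general group-theoretic fact: a finite-index subgroup of a finitely generated group is finitely generated. Since $G$ is finitely generated and $\mathrm{St}_G(a)$ has index $2$ in $G$, the subgroup $\mathrm{St}_G(a)$ is itself finitely generated. It is periodic, being a subgroup of the periodic group $G$, and by the observation above every one of its elements fixes $a$ (and $b$), so it is a finitely generated periodic subgroup of $\mathrm{Diff}^1_{a,+}(\SS^2)$.

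At this point I would simply invoke Proposition \ref{Global}: every finitely generated periodic subgroup of $\mathrm{Diff}^1_{a,+}(\SS^2)$ is finite (indeed abelian). Hence $\mathrm{St}_G(a)$ is finite, and since it has index $2$ in $G$, the group $G$ is finite, which is the assertion of the proposition.

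I expect no genuine analytic obstacle here; the entire content is the structural observation that a cardinality-$2$ orbit forces a finite-index stabilizer, which, combined with the finite-index/finitely-generated principle, lets one apply the already-established global fixed point result verbatim. The only point requiring a line of care is verifying that elements fixing $a$ automatically fix $b$, so that $\mathrm{St}_G(a)$ really lies in $\mathrm{Diff}^1_{a,+}(\SS^2)$; once that is in place the argument is essentially bookkeeping. (One could alternatively work with $\mathrm{St}_G(b)$ and Lemma \ref{abelian} to obtain the additional conclusion that the stabilizer is abelian, but for finiteness Proposition \ref{Global} suffices.)
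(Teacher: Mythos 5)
Your proof is correct, but it follows a genuinely different route from the paper's. The paper also begins with the two-point orbit $\{x_0,x_0'\}$, but it never exploits the fact that the stabilizer has index $2$; instead it observes that the permutation action of $G$ on a two-point set is abelian, so the commutator subgroup $[G,G]$ lies inside $\mathrm{St}_G(x_0)$, which is abelian by Lemma \ref{abelian} (the version of Proposition \ref{Global} adapted to possibly non-finitely-generated stabilizers). Hence the second derived subgroup $[[G,G],[G,G]]$ is trivial. The paper then runs a derived-series induction: for a group generated by $s$ torsion elements, every element is a bounded product of powers of generators times a commutator, so $[G,G]$ has finite index and, by Schreier's lemma, is finitely generated and periodic; iterating, the trivial group $[[G,G],[G,G]]$ has finite index in $G$, forcing $G$ to be finite. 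Your argument short-circuits all of this: orbit--stabilizer gives $[G:\mathrm{St}_G(a)]=2$, Schreier's lemma (which the paper invokes anyway) gives finite generation of $\mathrm{St}_G(a)$, and Proposition \ref{Global} then applies verbatim to the stabilizer, since quasi-rotations are orientation-preserving $C^1$ diffeomorphisms. What you gain is brevity and directness, plus a slightly stronger structural conclusion (an abelian subgroup of index $2$, which in particular implies the metabelian property the paper derives); what the paper's route offers is a technique --- finitely generated periodic groups with solvability constraints are finite via the derived series --- that does not depend on identifying a finite-index subgroup with a global fixed point, and so generalizes to situations where only commutator information is available. Your observation that elements fixing $a$ automatically fix $b$ is correct (injectivity plus invariance of the orbit) and is the same elementary fact the paper uses, in mirror image, to place $[G,G]$ inside the stabilizer.
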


\begin{proof}

Let $x_0$ be a point  with  $G$-orbit of cardinality 2.  We write $\mathcal O_G({x_0}) =\{x_0, x'_0\}$.

\smallskip

By Corollary \ref{abelian},  $\mathrm{St}_G ({x_0})$ is abelian.

\smallskip

We claim  that $[G,G]$, the first derivated subgroup of $G$, is contained in $\mathrm{St}_G ({x_0})$.

Let $g\in G$, we have  $g(x_0) \in \mathcal O_G({x_0}) =\{x_0, x'_0\}$ and
 $g(x'_0) \in \mathcal O_G({x_0}) =\{x_0, x'_0\}$. Noting that if $g(x_0) = x'_0$ then $g^{-1}(x_0) = x'_0$, it is easy to check that $[f,g] (x_0)= f^{-1} g^{-1} f g (x_0) = x_0$, in all possible cases.

 \medskip

We conclude that $[G,G]$ is abelian, this means that $[[G,G],[G,G]]$, the second derivated group of $G$, is trivial.

The last part of this proof is a general fact for finitely generated groups generated by $s$ finite order elements $g_1,..., g_s$:
{\em  ``any element of $G$ can be written $g= g_1^{p_1}....g_s^{p_s} C$, where $C\in [G,G]$ and $p_i\geq 0$ is bounded by the order of $g_i$."}
 So the index of $[G,G]$ in $G$ is bounded by the product of the orders of $g_1,..., g_s$. Moreover,  Schreier's lemma
states that any subgroup of finite index in a finitely generated
group is finitely generated. This implies that  $[G,G]$ is also finitely generated.

Here, as $G$ is a periodic group then $[G,G]$ is a finitely generated periodic group.
In particular, it is  generated by finite order elements. Hence, last argument shows that  $[[G,G],[G,G]]$ has finite index in $[G,G]$ which has finite index in $G$.  Finally $[[G,G],[G,G]]$ has finite index in $G$. But, we also have shown that $[[G,G],[G,G]]$ is trivial, so $G$ is finite.
\end{proof}

\medskip

\begin{rema} Under the hypotheses that $G$ is a finitely generated periodic group of homeomorphisms, $\#P_G= 2$ and $G$ preserves a probability measure on $\mathbb S^2\setminus P_G$, analogous arguments as those developed in \cite{GL3} show that $G$ is finite and abelian.
\end{rema}

\smallskip
A sketch of the proof of last Remark is the following:
$G$ acts on the open annulus $\mathbb S^2\setminus P_G$ and preserves a measure. Hence, we can define the  rotation map $\rho : G \rightarrow \mathbb S^1$; the number $\rho(g)$ coincides with the angle of a rotation conjugated to $g$.
As in  \cite{GL3}, one shows that $\rho$ is a morphism. Therefore, it vanishes on commutators. Then
any commutator is  conjugate to a rotation of angle $0$, so it is trivial. It follows that  $G$ is abelian  and since it is also finitely
 generated and periodic, it is finite.

\subsection{$G$ has a finite orbit of cardinality at least $3$}

\medskip

\begin{prop}\label{Cardgeq3}

Let $G$ be a finitely generated periodic subgroup of  quasi-rotations.  If $G$ admits a finite orbit of cardinality at least $3$, then  $G $ is finite.

\end{prop}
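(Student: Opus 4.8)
The plan is to exploit the permutation action of $G$ on its finite orbit. Let $\mathcal{O}=\{x_1,\dots,x_k\}$ be a $G$-orbit with $k\geq 3$, and consider the homomorphism
$$\phi\colon G\longrightarrow \mathrm{Sym}(\mathcal{O})\cong S_k$$
that records how each $g\in G$ permutes the points of $\mathcal{O}$. Since $S_k$ is a finite group, it suffices to prove that $\phi$ is injective: then $G$ embeds into $S_k$ and is therefore finite. The kernel of $\phi$ is exactly the pointwise stabilizer $\ker\phi=\bigcap_{i=1}^{k}\mathrm{St}_G(x_i)$, so the whole proof reduces to showing this pointwise stabilizer is trivial.

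The key observation is that a non-trivial element $g\in\ker\phi$ would fix every one of the $k\geq 3$ points of $\mathcal{O}$. But $G$ consists of quasi-rotations, and by Proposition \ref{isotop} every non-trivial quasi-rotation has \emph{exactly} two fixed points. An element fixing three or more distinct points is thus forced to be the identity, whence $\ker\phi=\{id\}$ and $\phi$ is injective. This immediately yields the finiteness of $G$.

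I expect this argument to require essentially no further input: beyond Proposition \ref{isotop}, which supplies the two-fixed-point property, neither the finite generation nor the periodicity of $G$ plays any role. There is accordingly no serious obstacle to overcome; the only point deserving attention is the comparison with the cardinality-$2$ situation handled separately in Proposition \ref{card2}. Indeed, the hypothesis $k\geq 3$ is exactly what makes the pointwise stabilizer collapse: for $k=2$ a non-trivial element may fix both points of $\mathcal{O}$ (using up precisely its two allowed fixed points) without being trivial, so the pointwise stabilizer $\mathrm{St}_G(x_0)$ can fail to vanish and the elementary embedding argument no longer applies. For $k\geq 3$ this escape route is closed, and the short permutation-representation argument suffices.
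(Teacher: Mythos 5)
Your proof is correct, and it is organized differently from the paper's, although both arguments turn on the same key fact from Proposition \ref{isotop}: a non-trivial quasi-rotation has exactly two fixed points, so any element fixing three distinct points must be the identity. The paper never introduces the permutation representation. Instead it argues in two steps: first, a double pigeonhole argument inside $\mathrm{St}_G(x_0)$ (using finiteness of the orbit) produces, from any infinite list of stabilizer elements, two distinct ones whose quotient fixes $x_0$, $x_1$ and $x_2$, forcing a contradiction and showing $\mathrm{St}_G(x_0)$ is finite; second, the coset decomposition $G=\bigcup_{i=0}^{n} g_i\bigl(\mathrm{St}_G(x_0)\bigr)$ exhibits $G$ as a finite union of finite sets. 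Your observation that the action of $G$ on the orbit is faithful collapses these two steps into one: the embedding $G\hookrightarrow S_k$ gives finiteness immediately (the paper's route implicitly shows that an element of $G$ is determined by its values at $x_0,x_1,x_2$, hence $|G|\leq k^3$, while yours gives $|G|\leq k!$; neither bound matters for the statement). You are also right that, once $G$ is assumed to consist of quasi-rotations with a finite orbit of cardinality at least $3$, neither finite generation nor periodicity is needed --- the paper's own proof does not use them either, which is exactly why the paper can extract Corollary \ref{coroCardgeq3} for homeomorphism groups, where periodicity enters only through Corollary \ref{pseudo} to guarantee that the group consists of quasi-rotations in the first place. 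Finally, your contrast with Proposition \ref{card2} is the right one: for an orbit of cardinality $2$ the pointwise stabilizer can be non-trivial (both fixed points of a non-trivial element may lie in the orbit), so the faithfulness argument breaks down and the paper must resort there to the commutator-subgroup argument instead.
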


\begin{proof}

Let $x_0\in\SS^2$ having a finite $G$-orbit.

\smallskip

 As $\#\mathcal O_G({x_0})\geq 3$,  we can  write $\mathcal O_G({x_0}) =\{x_0=g_0(x_0), x_1=g_1(x_0),..., x_n= g_n(x_0)\} $, with distinct $x_i$ and $n\geq 2$.

\medskip

  We first claim that if $G$ is not finite, then $x_0 \in P_G$.

\smallskip

Indeed, as $G$ is not finite, there exists $f\notin \{Id=g_0, g_1, ..., g_n\} $. Since $f(x_0) \in \mathcal O_G({x_0})$, there exists $i$ such
that $f(x_0)= g_i(x_0)$, and then $x_0$ is fixed by $g_i ^{-1}f $.

\medskip

 We secondly prove that $\mathrm{St}_G (x_0)$ is a finite group.

\smallskip

If $\mathrm{St}_G (x_0)$ is not a finite group, we write  $\mathrm{St}_G(x_0) = \{f_n, n\in \mathbb N\}$, where $f_n\not=f_m$ if $n\not= m$.

As $\mathcal O_G({x_0})$ is finite, there are infinitely many  $f_{s_n} , n\in \mathbb N$ such that $f_{s_n}(x_1)=f_{s_m}(x_1)$, for any $n,m$.  Then $f_{s_0}^{-1} \circ f_{s_n} (x_1)=x_1$ and so $F_n= f_{s_0}^{-1} \circ f_{s_n}$ fixes $x_0$ and $x_1$.

\smallskip

 Analogously, there exist infinitely many  $F_{k_n} , n\in \mathbb N$ such that $F_{k_n}(x_2)=F_{k_m}(x_2)$, for any $n,m$. Then $F^{-1}_{k_0} \circ F_{k_n} (x_2)=x_2$ and so $F^{-1}_{k_0} \circ F_{k_n}$ fixes $x_0$,  $x_1$ and $x_2$.

\smallskip

Consequently, $F^{-1}_{k_0} \circ F_{k_n}=Id$, since a non trivial quasi-rotation has exactly two fixed points. Finally, $F_{k_0}=F_{k_n}$, for any $n\in \N$. This is a contradiction. Hence $\mathrm{St}_G (x_0)$ is  finite.

\medskip

 Finally, we conclude that $G$ is finite, by proving that $\displaystyle G=\bigcup_{i=0}^n g_i (\mathrm{St}_G(x_0)) $: let $g\in G$, as $g(x_0) \in \mathcal O_G({x_0})$, there exists $i$ such that $g(x_0)=g_i(x_0)$. Hence $g_i^{-1} g\in \mathrm{St}_G(x_0)$ and then
$ g\in g_i(\mathrm{St}_G(x_0))$. \end{proof}

\section{ Burnside problem for groups of homeomorphisms of  the remain surfaces.}

In this section we prove that a finitely generated periodic group of homeomorphism of the closed disk is finite. We also prove Corollary \ref{Ann} and Theorem \ref{theo2}.

\subsection{ Burnside problem for groups of homeomorphisms of the closed 2-disk. } \

\medskip

Let $\Gamma$ be a finitely generated periodic subgroup of homeomorphisms of $\DD^2$ and let $C=\partial \DD^2$. As $C$ is invariant by $\Gamma$, according to the positive answer to the Burnside problem on the circle, $\Gamma$ acts as an abelian and finite group  on $C$ and this action is faithful since  any periodic homeomorphism on $\DD^2$ whose restriction to $C$ is the identity is the identity (see  \cite{CK}). As a consequence,  $\Gamma$ is an abelian and finite group.

\subsection{ Burnside problem for groups of homeomorphisms of the closed annulus (Corollary \ref{Ann}) } \

\medskip
{\bf First Proof using Burnside Problem on $\TT^2$.}

Let $\Gamma$ be a finitely generated periodic subgroup of $\mathrm{Homeo}(\AA^2)$.  We can form $\TT^2$ the double of $\AA^2$ by gluing two copies of $\AA^2$: $A_1$ and $A_2$ along their boundary.

\smallskip

 Let $g\in \Gamma$, we denote by $g_i$ its corresponding map on $A_i$.  We define the double $\tilde g$  of $g$ on $\TT^2$ by $\tilde g = g_i(x) $ if $x\in A_i$. By construction $\tilde g$ is a finite order (same order as $g$)  homeomorphism that preserves the gluing boundary $C$  and the induced action of $\Gamma$ ($g \mapsto \tilde g$) is faithful.

 \smallskip

Therefore $\Gamma$ acts on $C$ as a finitely generated periodic group and its subgroup preserving each boundary component is of index at most $2$. According to the positive answer to the Burnside problem on the circle, this subgroup acts on each component as a finite group and the same holds for $\Gamma$. We conclude that $\Gamma$ acts on $\TT^2$ with a finite orbit. In particular $\Gamma$ acts faithfully and  preserving a probability measure on $\TT^2$,  the main theorem of \cite{GL3} implies that $\Gamma$ is finite.

 \medskip
{\bf Second Proof.}
Let $g \in \Gamma$. Any finite order  homeomorphism on the closed annulus is an isometry for some flat Riemannian metric (see \cite{Ep} and \cite{St}).
Let $\Gamma_0$ be the subgroup of $\Gamma$ consisting in homeomorphisms that preserve any connected component of the boundary. It is of finite index, so for proving that $\Gamma$ is finite it suffices to prove that its subgroup $\Gamma_0$,  is finite.
Let $C_1$ and $C_2$ be the connected components of the boundary. As $C_i$ is invariant by $\Gamma_0$, $\Gamma_0$ acts as  an abelian and finite group  on $C_i$ and this action is faithful, since an orientation preserving isometry on closed annulus is a rotation about the  central axis, if it is equal to identity on $C_1 \cup C_2$, then it is the identity on the annulus.

\medskip
The forthcoming subsections provide the proof of Theorem \ref{theo2}.
\subsection{ Burnside problem for groups of homeomorphisms of a closed orientable surface $S$ of genus $g$ greater than one. } \

\smallskip

Let $\Gamma$ be a finitely generated periodic subgroup of homeomorphisms of  $S$.

Any finite order  homeomorphism on the $S$ is an isometry for some Riemannian metric of constant curvature equal to $-1$ (see \cite{Ep} and \cite{St}).

Let $\phi: \Gamma \rightarrow \mathrm{GL}(2g, \ZZ)$ be the homology morphism.

The image $\phi(\Gamma)$  is a finitely generated periodic subgroup of  $\mathrm{GL}(2g,\ZZ)$, hence it is finite, according to Schur's theorem.

We will use a strong result: there is no torsion element in the Torelli's group, that is, any homeomorphism homologous to identity whose isotopic class is periodic is isotopic to identity (see, for example, Theorem 6.12 of \cite{FM}).

Then the kernel of $\phi$ consists in isometries isotopic to identity. But, the only isometry isotopic to identity that preserves orientation is the identity (see \cite{FM} proof of Proposition 7.7). Since the subgroup of  isometries isotopic to identity that preserve orientation is of finite index  in the group  of isometries isotopic to identity, it follows that  the kernel of $\phi$ is  also finite. As a consequence, $\Gamma$ is finite.

\smallskip
\subsection{ Burnside problem for groups of homeomorphisms of a compact orientable surface $S$ whose boundary has more than three circles. } \

\smallskip

Let $\Gamma$ be a finitely generated periodic subgroup of homeomorphisms of  $S$.
$\Gamma$ preserves the boundary of $S$, that is, an union of circles.

 We can form the double of $S$, $\widehat{S}$, by gluing two copies of $S$: $S_1$ and $S_2$ along their respective boundaries $\gamma_1$ and $\gamma_2$.

Let $g\in \Gamma$, we denote by $g_i$ its corresponding map on $S_i$.  We define the double $\tilde g$  of $g$ on $\widehat{S}$ by $\tilde g = g_i(x) $ if $x\in S_i$. By construction $\tilde g$ is a finite order (same order as $g$)  homeomorphism that preserves the gluing boundary $C$  and the induced action of $\Gamma$ ($g \mapsto \tilde g$) is faithful on the compact surface  $\widehat{S}$ of genus greater than one. By last subsection we have that $\Gamma$ is  finite.

%We can form $\SS^2$ the double of $\DD^2$ by gluing  two copies of $\DD^2$  : $D_1$ and $D_2$  along their respective  boundaries
 % $C_1$ and  $C_2$.

%This Theorem yields to a smooth action of  $\Gamma$ on the double $\SS^2$  that
%
%
%-  is faithful : let $g$ and $h$  in $\Gamma$, if $\tilde g=\tilde h$  then by construction
%$\phi_1 ^{-1} g_1 \phi_1= \phi_1 ^{-1} h_1 \phi_1$ ; since $\phi_1$ is an homeomorphism, we get that  $g_1=h_1$ and therefore $g=h$.
%
%- preserves the gluing circle $C$,  since  $C_i$  is $\tilde g_i$-invariant, for any $g\in \Gamma$.
%
%
%\smallskip
%
%
%In conclusion, the action of $\Gamma$ on  $C$ is the action of a finitely generated periodic group on the circle so it acts as a finite group on $C$, in particular  $\Gamma$  admits a finite orbit and therefore the action of  $\Gamma$  on $\SS^2$  has a finite orbit. Applying  Theorem \ref{theo1}, we get that $\Gamma$ is finite.

\section{Proof of Theorem \ref{theo3} and Corollary \ref{coro21}.}

\noindent {\bf Proof of Theorem \ref{theo3}.}

{\bf 1}. Let $\Gamma$  be a finitely generated periodic group of diffeomorphisms of a  compact manifold $M$ of dimension $n$. Suppose that $\Gamma$ acts on $M$ with a finite orbit.

\begin{clai} \label{clai1} There exists $x_0\in M$ such that $\Gamma_0= St_\Gamma ({x_0})$ is a finite index, finitely generated periodic  subgroup of $\Gamma$.
\end{clai}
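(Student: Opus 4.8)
The plan is to let $x_0$ be any point lying in a finite $\Gamma$-orbit, whose existence is exactly the standing hypothesis, and then to check the three asserted properties of $\Gamma_0=\mathrm{St}_\Gamma(x_0)$ in turn. All three — finite index, finite generation, periodicity — are formal consequences of the finite-orbit assumption together with classical group theory, so I expect this claim to be the routine preliminary step of the argument for Theorem \ref{theo3}, with the genuine work (reducing the study of $\Gamma_0$, which fixes $x_0$, to a differential/linearization analysis in the spirit of Lemma \ref{Diff} and Proposition \ref{Global}) coming afterwards.

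First I would establish finite index. Write $\mathcal O=\mathcal O_\Gamma(x_0)$ for the finite orbit through $x_0$. The action of $\Gamma$ on $\mathcal O$ induces, via the orbit-stabilizer correspondence, a bijection between the left cosets $\Gamma/\mathrm{St}_\Gamma(x_0)$ and the points of $\mathcal O$, so that $[\Gamma:\Gamma_0]=\#\mathcal O<\infty$. This is the sole place where the finite-orbit hypothesis enters. Periodicity is then immediate: every element of $\Gamma_0\subset\Gamma$ has finite order, so $\Gamma_0$ is periodic.

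For finite generation I would invoke Schreier's lemma exactly as in the proof of Proposition \ref{card2}: a subgroup of finite index in a finitely generated group is itself finitely generated. Since $\Gamma$ is finitely generated and $\Gamma_0$ has finite index by the previous step, $\Gamma_0$ is finitely generated. This is the only ingredient that is not purely elementary, but it is classical and has already been used above, so it poses no real obstacle; collecting the three properties completes the proof of the claim, and with it the choice of $x_0$ on which the remainder of the proof of Theorem \ref{theo3} will rest.
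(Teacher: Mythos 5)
Your proposal is correct and follows essentially the same route as the paper: the paper also picks $x_0$ in the finite orbit, shows $\Gamma=\bigcup_{i=0}^m g_i\,\mathrm{St}_\Gamma(x_0)$ (which is just the orbit–stabilizer correspondence written out by hand), and then invokes Schreier's lemma for finite generation, with periodicity inherited trivially. No differences worth noting.
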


\begin{proof} Let $\mathcal{O} _{x_0} =\{x_0,x_1,...,x_m\}$ be a finite $\Gamma$-orbit. For $i\in \{0,...,m\}$,  we write $x_i = g_i (x_0)$, where $g_i\in \Gamma$ and $g_0=Id_M$.

Let $g\in \Gamma$, as $g(x_0) \in \mathcal{O} _{x_0}$ there exists some $i \in \{0,...,m\}$ such that $g(x_0) = g_i(x_0)$. Therefore $g_i^{-1} g (x_0) = x_0$ that is  $g\in g_i( St_\Gamma ({x_0}))$.

In conclusion, $\displaystyle G= \bigcup_{i=0}^m g_i( St_\Gamma ({x_0}))$, meaning that $St_\Gamma ({x_0})$ has finite index in $G$ ; according to Schreier's Lemma it is finitely generated.
\end{proof}

\begin{rema} As a consequence of this claim, for proving that $\Gamma$ is finite it suffices to prove that its subgroup $\Gamma_0$, acting with a global fixed point on $M$, is finite. This is the purpose of the next proposition.
\end{rema}

\begin{prop} Let $\Gamma_0$ be finitely generated periodic group of diffeomorphisms of $M$. If $\Gamma_0$ acts on $M$ with a global fixed point then $\Gamma_0$ is finite
\end{prop}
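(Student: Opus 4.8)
The plan is to adapt the proof of Proposition \ref{Global}, replacing the two-dimensional differential by the full differential at the global fixed point. Denote by $x_0$ a point fixed by every element of $\Gamma_0$ and set $n=\dim M$. After fixing a linear identification $T_{x_0}M\approx \R^n$, define
$$D:\Gamma_0\rightarrow \mathrm{GL}(n,\R),\qquad D(g)=Dg(x_0).$$
The chain rule at the fixed point, $D(fg)=Df(g(x_0))\,Dg(x_0)=Df(x_0)\,Dg(x_0)$, shows that $D$ is a group morphism, and $D(g)^m=D(g^m)=Id$ whenever $g^m=Id$, so $D$ sends finite order elements to finite order elements. Consequently $D(\Gamma_0)$ is a finitely generated periodic subgroup of $\mathrm{GL}(n,\R)\subset \mathrm{GL}(n,\CC)$, hence finite by Schur's theorem \cite{Sh}.

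The core of the argument will be to prove that $D$ is injective on $\Gamma_0$. So let $g\in\Gamma_0$ have order $m$ and satisfy $Dg(x_0)=Id$; I claim $g=Id$. First I would show that $g$ is the identity near $x_0$ by a symmetrization argument. Choose a $C^1$ chart $\psi$ on a neighborhood $U$ of $x_0$ with $\psi(x_0)=0$, and a smaller neighborhood $V$ of $x_0$ with $g^j(V)\subset U$ for $0\le j\le m-1$; then set
$$\Phi(x)=\frac{1}{m}\sum_{j=0}^{m-1}\psi\big(g^j(x)\big),\qquad x\in V.$$
Since $g^m=Id$, the index shift $j\mapsto j+1$ permutes the summands cyclically, giving $\Phi\circ g=\Phi$ on $V$. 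On the other hand $D\Phi(x_0)=\frac{1}{m}\sum_{j=0}^{m-1}D\psi(x_0)\,(Dg(x_0))^j=D\psi(x_0)$, which is invertible. Hence $\Phi$ is a local $C^1$-diffeomorphism at $x_0$, and its injectivity together with $\Phi\circ g=\Phi$ forces $g=Id$ on a neighborhood of $x_0$.

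To globalize I would use connectedness of $M$. Let $W=\{x\in M: g\equiv Id \text{ on some neighborhood of } x\}$. This set is open by definition and nonempty by the previous step. It is also closed: if $x_k\to x$ with $x_k\in W$ then $g(x)=x$, and continuity of $Dg$ (recall $g$ is $C^1$) gives $Dg(x)=Id$, so repeating the symmetrization argument centered at $x$ yields $g\equiv Id$ near $x$ and $x\in W$. Since $M$ is connected, $W=M$ and $g=Id$. Therefore $\ker(D|_{\Gamma_0})$ is trivial, $\Gamma_0$ embeds into the finite group $D(\Gamma_0)$, and $\Gamma_0$ is finite.

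The main obstacle is precisely this kernel-triviality step. In the spherical setting of Proposition \ref{Global} one could exploit the classification of compact subgroups of $\mathrm{SL}(2,\R)$, a tool with no analogue in higher dimension; here one must instead rule out nontrivial periodic diffeomorphisms tangent to the identity at $x_0$. The symmetrization above does this cleanly because the cyclic group $\langle g\rangle$ is finite, which is exactly what makes $\Phi$ well defined and $g$-invariant; the only points requiring care are the choice of a neighborhood $V$ on which all iterates $g^j$ remain in the chart domain, and the appeal to connectedness of $M$ to pass from local to global triviality.
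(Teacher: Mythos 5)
Your proposal is correct, and it shares the paper's skeleton --- the representation $D(g)=Dg(x_0)$ at the global fixed point, whose image is a finitely generated periodic subgroup of $\mathrm{GL}(n,\RR)$ and hence finite by Schur's theorem --- but it settles the crucial step, injectivity of $D$, by a genuinely different argument. The paper dispatches a $g$ with $D(g)=Id$ in two lines: being of finite order, $g$ is an isometry of some Riemannian metric on $M$ (obtained by averaging a metric over the cyclic group $\langle g\rangle$), and an isometry of a connected Riemannian manifold is determined by its value and its differential at a single point, so $g=Id$. You instead run a Bochner-type linearization: averaging a chart over $\langle g\rangle$ yields a local $C^1$ map $\Phi$ with invertible differential at $x_0$ satisfying $\Phi\circ g=\Phi$, which forces $g=Id$ near $x_0$, and an open-closed argument then propagates this identity over the connected manifold. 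The paper's route is shorter because it quotes standard facts; yours is more self-contained (nothing beyond the chain rule, the inverse function theorem and connectedness) and is in fact better adapted to the stated $C^1$ hypothesis, since for a merely $C^1$ diffeomorphism the averaged invariant metric is a priori only continuous, so the isometry-rigidity fact the paper invokes needs extra justification, whereas your symmetrization uses only $C^1$ regularity. Two small points of care in your write-up, neither fatal: the identity $\Phi\circ g=\Phi$ should be asserted on a $\langle g\rangle$-invariant neighborhood such as $\bigcap_{j=0}^{m-1}g^j(V)$ (or on $V\cap g^{-1}(V)$, which suffices), and both your argument and the paper's implicitly require $M$ to be connected.
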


\begin{proof}

Consider the map  $\mathbf {D}:  \Gamma_0 \rightarrow \mathrm{GL}(n,\R)$, where $D(g) = D g(x_0)$ is  the differential map  of $g$  at $x_0$ (after identification of $T_{x_0}M $ to $\RR^n$).

\medskip

It is easy to see that the map $D$ is a morphism and it is faithful.
Indeed, let $g\in \Gamma_0$ such that $D(g) =Id$. We have already noted that $g$ is an isometry for some Riemannian metric ($m_g$) on $M$. Since an isometry is uniquely determined by its value and its differential at a single point, we get that $g=Id_M$.

Then $\Gamma_0$ is isomorphic to its image $D(\Gamma_0)$ which is a finitely generated periodic subgroup of  $\mathrm{GL}(n,\R)$, hence finite, according to Schur's theorem. This concludes the case where $\Gamma$ acts with a finite orbit on $M$

\bigskip

{\bf 2.} Let $\Gamma$  be a finitely generated periodic group of diffeomorphisms of a compact  manifold $M$.  Suppose that $\Gamma$ preserves a finite union of circles on $M$.

By an analogous argument to Claim \ref{clai1}, it suffices to prove that a finitely generated periodic subgroup  of diffeomorphisms of $M$ that preserves a circle is finite. According to the positive answer to the Burnside problem on the circle, $\Gamma$ admits a finite orbit (on its invariant circle) so by part {\bf 1}  it is finite.
\end{proof}

\noindent {\bf Proof of  Corollary \ref{coro21}.}

Let $f$ be a non trivial central element of $\Gamma$, a finitely generated periodic subgroup of $Diff^1 (\SS^2)$. As $f$ commutes with any $g$ in $\Gamma$, the set $Fix f$ consisting of its fixed points is $G$-invariant. The classification of finite order homeomorphisms of $\SS^2$ indicates that $Fix f $ is either finite or a circle. Hence, Theorem \ref{theo3} implies that $\Gamma$ is finite. \hfill $\square$

\bigskip

\section{Groups  of even bounded orders}

The aim of this section is  proving Corollary \ref{coroeven}.

\smallskip

Let $G$ be a finitely generated periodic group of spherical diffeomorphisms with even bounded orders. The subgroup of orientation preserving elements of $G$ is a group with even bounded orders of index at most 2 in $G$, then we can suppose that $G$ consists in $C^1$ quasi-rotations, in particular any non trivial element of $G$ has
exactly two fixed points.

\medskip

According to the classification of the finite subgroups of $Diff_+^1 (\SS^2)$  and the fact that alternating groups $A_4$, $A_5$ and symmetric group $S_4$ contain elements of order 3, a finite group with even orders of orientation preserving spherical diffeomorphisms is either

\begin{enumerate}
\item a cyclic group $\ZZ / m\ZZ$ where
 {$m=2 p$}, $p\in \NN$ or
\item a diedral group $\DD_m =<\sigma, \tau \vert \sigma^2=\tau^m= 1, \sigma\tau\sigma= \tau ^{-1} > \\ = <\sigma, \sigma' \vert   \sigma^2= (\sigma') ^2= (\sigma \sigma')^m=1>$,
  {$m=2 p$},  $p\in \NN$.
\end{enumerate}

\medskip

Note that a group with even orders always contains involutions (elements of order 2) and let us denote $Inv(G)=\{\sigma \in G\setminus{Id} : \sigma^2=1\}$ and  $Z(\sigma)$ the centralizer of $\sigma$ in $G$, that is $Z(\sigma_0)= \{f\in G : f\sigma_0=\sigma_0f \}$.

\medskip

\begin{lemm}\label{carr} \

Let $G$ be a finitely generated periodic group  of orientation preserving spherical diffeomorphisms with even bounded orders.

\begin{enumerate}

\item Let $\sigma_0\in Inv(G)$, the set $Z(\sigma_0)\cap Inv(G)$ is finite,

\item $Inv(G)$ is finite.
\end{enumerate}

\end{lemm}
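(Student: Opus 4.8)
The plan is to establish (1) by a direct structural analysis of the centralizer $Z(\sigma_0)$, and then to deduce (2) from (1) by a commuting-graph argument in which the even-order hypothesis supplies a uniform bound on distances. Throughout I use that every non-trivial element of $G$ is a quasi-rotation with exactly two fixed points (Proposition~\ref{isotop}) and that point-stabilizers are governed by Lemma~\ref{abelian}.

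For (1), I would fix $\sigma_0\in Inv(G)$ and set $\mathrm{Fix}(\sigma_0)=\{a,b\}$. Every $f\in Z(\sigma_0)$ commutes with $\sigma_0$ and hence preserves $\{a,b\}$, so there is a homomorphism $\phi:Z(\sigma_0)\to \mathrm{Sym}(\{a,b\})\cong\mathbb{Z}/2\mathbb{Z}$ recording whether $f$ fixes or swaps $a$ and $b$; its kernel $H=\{f\in Z(\sigma_0):f(a)=a\}$ has index at most $2$. Since $\sigma_0\in \mathrm{St}_G(a)$, Lemma~\ref{abelian} shows every element of $\mathrm{St}_G(a)$ has the same pair of fixed points $\{a,b\}$, so $H\subseteq \mathrm{St}_G(a)$, and it remains to prove that $\mathrm{St}_G(a)$ is finite. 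By Lemma~\ref{abelian} the group $\mathrm{St}_G(a)$ is abelian and each of its finitely generated subgroups is finite and conjugate to a group of rotations about the axis $\{a,b\}$, hence cyclic; thus $\mathrm{St}_G(a)$ is locally cyclic. A locally cyclic torsion group embeds in $\mathbb{Q}/\mathbb{Z}$, and the bounded-order hypothesis bounds its exponent by some fixed $L$, so it lies in the cyclic group $\tfrac{1}{L}\mathbb{Z}/\mathbb{Z}$ and is finite. Consequently $H$ is finite, $Z(\sigma_0)$ (having $H$ as a subgroup of index at most $2$) is finite, and in particular $Z(\sigma_0)\cap Inv(G)$ is finite. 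I would stress that the bounded-order hypothesis is indispensable here: in $\mathrm{SO}(3,\RR)$ the centralizer of a half-turn contains a whole circle of involutions, and it is precisely the exponent bound that collapses $\mathrm{St}_G(a)$ to a finite cyclic group.

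For (2), I would consider the commuting graph on the vertex set $Inv(G)$, joining two distinct involutions by an edge exactly when they commute (write $N(\cdot)$ for the neighbour set). Part (1) says every vertex $\sigma$ has finite degree, since its neighbours together with $\sigma$ itself form $Z(\sigma)\cap Inv(G)$. The key observation is that this graph has diameter at most $2$: given involutions $\sigma,\tau$, the subgroup $\langle\sigma,\tau\rangle$ is dihedral of order $2m$, where $m$ is the order of $r=\sigma\tau$; because $G$ has even orders $m$ is even, so $z=r^{m/2}$ is a non-trivial involution of $G$, central in $\langle\sigma,\tau\rangle$, and therefore commuting with both $\sigma$ and $\tau$. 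Hence any two involutions admit a common neighbour, so every element of $Inv(G)$ lies within distance $2$ of a fixed $\sigma_0$. Finally, a graph of diameter at most $2$ all of whose degrees are finite is itself finite, since $Inv(G)$ is contained in $\{\sigma_0\}\cup N(\sigma_0)\cup\bigcup_{w\in N(\sigma_0)}N(w)$, a finite union of finite sets.

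I expect the main obstacle to be the passage in (2) from the local finiteness supplied by (1) to global finiteness; the decisive input is the even-order hypothesis, which manufactures the central involution $z=r^{m/2}$ forcing diameter $2$, and without it the commuting graph need not have bounded diameter and the argument would break down. A secondary point requiring care is the finiteness of $\mathrm{St}_G(a)$ in (1): local cyclicity alone is not enough, since an infinite group of rotations about a common axis is locally cyclic, so one genuinely needs the bounded exponent to conclude finiteness.
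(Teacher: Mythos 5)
Your proposal is correct. For item (2) you follow essentially the same route as the paper: your central involution $z=(\sigma\tau)^{m/2}$ of the dihedral group $\langle\sigma,\tau\rangle$ (which exists precisely because the even-order hypothesis makes $m$ even) is exactly the paper's element $i_n$, and your ``diameter at most $2$ plus finite degrees'' conclusion is the paper's subsequence/pigeonhole contradiction recast in commuting-graph language; the only cosmetic difference is that the paper justifies the cyclic-or-dihedral structure of $\langle\sigma_0,\sigma_n\rangle$ via the classification of finite subgroups of $\mathrm{Diff}^1_+(\SS^2)$, while you use the purely group-theoretic fact that two involutions whose product has finite order $m$ generate a dihedral group of order $2m$. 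For item (1), however, your argument is genuinely different and yields a stronger conclusion. The paper considers the increasing chain $G_n=\langle i_0,\dots,i_n\rangle$ of subgroups generated by involutions commuting with $\sigma_0$; each $G_n$ preserves $\mathrm{Fix}(\sigma_0)$, hence is finite by Theorem~\ref{theo1}, hence cyclic or dihedral of order bounded in terms of the exponent bound, so the chain stabilizes and $Z(\sigma_0)\cap Inv(G)$ is finite. You instead bound the entire centralizer: $Z(\sigma_0)$ permutes $\mathrm{Fix}(\sigma_0)=\{a,b\}$, its subgroup fixing $a$ (of index at most $2$) lies in $\mathrm{St}_G(a)$, which by Lemma~\ref{abelian} is locally cyclic, and a locally cyclic torsion group of bounded exponent is finite. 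This proves that $Z(\sigma_0)$ itself is finite, not merely its set of involutions, at the cost of invoking Lemma~\ref{abelian} (hence the folkloric $C^1$ result) where the paper invokes Theorem~\ref{theo1}; and your closing remarks correctly identify where bounded orders is indispensable in both steps, as the circle of commuting half-turns in $\mathrm{SO}(3,\RR)$ shows.
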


\begin{proof}

%The first item is a consequence of Proposition 4.3 ($\sigma_0$ commutes with  elements of  $Z(\sigma_0)$ thus %$Fix(\sigma_0)$ is an invariant  finite subset of $Z(\sigma_0)$ and we can apply Theorem 2).
%Suppose by contradiction that  $Z(\sigma_0)\cap Inv(G)$ is infinite, we

Let us write $Z(\sigma_0)\cap Inv(G)= \{i_n,  n\in \NN\}$, where $i_0=\sigma_0$. %and $ \NN'\subset \NN$.

Fix $n\in\NN$, the group $G_n$ generated by $i_0$, ..., $i_n$ is finitely generated and fixes the set $Fix(i_0)$ (since $i_k$ commutes with $i_0$), then it is finite, by Theorem \ref{theo1}. Therefore this group is either a cyclic group or a diedral group and $ \{Id\} \subset G_0=<i_0> \subset \cdots \subset G_{n} \subset G_{n+1}\cdots$ .

This sequence stabilizes at some rank $n_0$, since $G$ is of bounded orders. That is, for all $n\geq n_0$ one has  $G_n= G_{n_0}$, hence $i_n \in G_{n_0}$, $\forall n\geq n_0$ and therefore $Z(\sigma_0)\cap Inv(G)$ is finite.

%$ <i_n, n\in \NN'>= G_{n_0}$  is finite and therefore $Z(\sigma_0)\cap Inv(G)$ is finite.

\medskip

We now prove item 2. Suppose by contradiction that there exists in $G$  an infinite sequence of involutions $\sigma_1, \cdots ,\sigma_n, \cdots$ .

For all  $n\in \NN$, the group $<\sigma_0, \sigma_n>$ is either cyclic or diedral. Therefore it contains an involution $i_n$  that commutes with $\sigma_0$ and $\sigma_n$ ($i_n=\sigma_0 $ in the cyclic case or $i_n=( \sigma_0\sigma_n) ^{\frac{m_n}{2}}$  in the diedral case $<\sigma_0, \sigma_n>= \DD_{m_n}$).

Since $Z(\sigma_0)\cap Inv(G)$ is finite, we can suppose (eventually passing to an infinite subsequence of $(\sigma_n)$) that  $i_n=i$, for all $n$.

Finally, $i$ commutes with all $\sigma_n$, that is $\{\sigma_n, n\in \NN\}  \subset Z(i)\cap Inv(G)$  which is finite by item 1, that is a contradiction. \end{proof}

\medskip

\noindent {\it End of proof of Corollary \ref{coroeven}.}

Applying Lemma \ref{carr}, we obtain that $Inv(G)$ and therefore {$Fix(Inv(G))=\{x \in \SS^2 : \sigma(x)=x, $ for some $\sigma \in Inv(G) \}$}  are finite sets.

As $Fix (f\sigma f^{-1})= f(Fix(\sigma))$  and $ f\sigma f^{-1} \in Inv(G)$  for all $f\in G$, the finite set  $Fix(Inv(G))$ is non empty and $G$-invariant. By Theorem \ref{theo1}, $G$ is finite. \hfill $\square$

\bigskip

\bigskip


\begin{thebibliography}{99}


\medskip


\bibitem{AN}  Adjan S.I. and Novikov P.S. { \em On infinite periodic groups I, II, III.} Izv. Akad. Nauk SSSR. Ser. Mat. 32 (1968), 212-244; 251-524; 709-731.


\bibitem {B}  Burnside W. {\em On unsettled question in the theory of discontinuous groups.} Quart. J. Math. 33 (1902), 230-238.



\bibitem{CK}   Constantin A. and Kolev B. {\em The theorem of Kerekjerto on periodic homeomorphisms of the disc and the sphere.} Enseign. Math. (2), vol 40 ,(1994), no. 3-4, 193-204.


%\bibitem{CC} Cantwell J., Conlon L. {\em An interesting class if $C^1$ foliations.} Topol. and its Applic. 126 (2002) 281-297.

%\bibitem{FF} Farb B. and Franks J.  \emph{Groups of homeomorphisms of one manifolds I: Actions of nonlinear groups.} Preprint, (2001).

%\bibitem{Gh} Ghys E. \emph{Groups acting on the circle.} Enseign. Math. (2) 47, no 3-4 (2001), 329-407.


\bibitem{Ei} Eilenberg S. {\em  Sur les transformations p\'eriodiques de la surface de la sph\`ere.} Fund. Math. 22 (1934), 28-41.



\bibitem{Ep} Epstein D. {\em  Pointwise periodic homeomorphisms.} Proc. London Math. Soc. 42 (1981), 415-460.

\bibitem{FM}  Farb B. and  Margalit D. \emph{A Primer on Mapping Class Groups.} Princeton University Press (2011)

%\bibitem{FH} Franks J.  and Handel  M. \emph{Triviality of some representations of $mcg(S_g)$ in $GL(n,C)$, $Diff(S^2)$ and $Homeo(T^2)$.} arXiv: 1102.4584 (2011).


\bibitem{Go} Golod E. S. {\em  On nil algebras and finitely residual groups.} Izv. Akad. Nauk SSSR. Ser. Mat. 1975, (1964), 273-276.

% \bibitem{Gr}   Grigorchuk R. I. {\em Degrees of growth of finitely generated groups and the theory of invariant means.} Izv. Akad. Nauk SSSR. Ser. Mat. vol. 48 (1984), no. 5, pp. 939-985.

%\bibitem{GLM} Guaschi J., Llibre J. and MacKay R.S. {\em  A classification of braid types for periodic orbits of
%diffeomorphisms of surfaces of genus one with topological entropy
%zero.} Publ. Mat. 35 (1991), no. 2, 543-558.

%\bibitem{GL2} Guelman N. and Liousse I.  \emph{ Actions of Baumslag-Solitar  groups on surfaces.}  Discrete Contin. Dyn. Syst. 33 (2013), no. 5, 1945-1964.

\bibitem{GL3} Guelman N. and Liousse I.  \emph{Burnside problem for measure preserving groups
and for 2-groups of toral homeomorphisms.} Geometriae Dedicata  168 (2014), no. 1, 387-396.

\bibitem{Iv}   Ivanov S.  {\em The free Burnside  groups of sufficiently  large exponents.}  Internat. J. Algebra Comput., no. 4, (1994),  1-308.

\bibitem{K} Von Kerekjerto B.  {\em \"Uber die periodischen transformationen der Kreisscheibe und der Kugelflache.} Math. Ann. 80 (1919-1920), 36-38.


\bibitem{La} Lang S.{ $\mathrm{SL}_2(\R)$}. Addison-Wesley Publishing Co., Reading, Mass.-London-Amsterdam,(1975).


\bibitem{Ly} Lysenok I.{ \em Infinite Burnside groups of even period.} Math. Ross. Izv., no. 60 (1996), 3-224.

\bibitem{MS}  Meeks W. and Scott P. {\em Finite group actions  on 3-manifolds.}  Invent. Math. 86 (1986), no. 2, 287-346.


%\bibitem{MZ} Misiurewicz M. and  Ziemian K. {\em Rotation sets and ergodic measures for torus homeomorphisms.} Fund. Math. 137 (1991), no. 1, 45-52.


\bibitem{Na} Navas A.  \emph{Groups of circle diffeomorphisms.} Chicago lectures in mathematic series (2011).

\bibitem{Ol} Ol'Shanskii A. {\em On the Novikov-Adian theorem.} Math. USSR Sb, no. 118 (1982), 203-235.



% \bibitem{Par}  Parkhe K. {\em Smooth gluing of group actions and applications.} arXiv:1210.2325.


\bibitem{RS} Rebelo J. and Silva A. \emph{On the Burnside problem in ${\rm Diff}(M)$.} Discrete Contin. Dyn. Syst. 17 (2007), no. 2, 423-439.

\bibitem{St} Stillwell J. \emph {Geometry of surfaces}  Springer-Verlag, New York, 1992

\bibitem{Sh} Schur I. \emph {\"Uber Gruppen linearer Substitutionen mit Koeffizienten aus einem algebraischen Zahlkorper.} (German) Math. Ann. 71 (1911), no. 3, 355-367.


%\bibitem{S} Seress, {\em A. Permutation group algorithms.} Cambridge Tracts in Mathematics, vol. 152, Cambridge University Press, Cambridge, 2003


%\bibitem{Z}  Zimmermann B., {\em On finite groups acting on spheres and finite subgroups of orthogonal groups.} arXiv:1108.2602 (2011).


\end{thebibliography}
\end{document}